\newtheorem{theorem}{Theorem}[section]
\newtheorem{lemma}[theorem]{Lemma}
\theoremstyle{definition}
\newtheorem{definition}[theorem]{Definition}
\newtheorem{example}[theorem]{Example}
\theoremstyle{remark}
\newtheorem{remark}[theorem]{Remark}
\numberwithin{equation}{section}
\theoremstyle{plain}
\newtheorem{axiom}{Axiom}
\newtheorem{conjecture}{Conjecture}
\newtheorem{corollary}{Corollary}
\newtheorem{exercise}{Exercise}
\newtheorem{proposition}{Proposition}
\chardef\@x10\chardef\@xv60
\def\tcitime{
\def\@time{%
  \@minute\time\@hour\@minute\divide\@hour\@xv
  \ifnum\@hour<\@x 0\fi\the\@hour:%
  \multiply\@hour\@xv\advance\@minute-\@hour
  \ifnum\@minute<\@x 0\fi\the\@minute
  }}%
\def\x@hyperref#1#2#3{%
   \catcode`\~ = 12
   \catcode`\$ = 12
   \catcode`\_ = 12
   \catcode`\# = 12
   \catcode`\& = 12
   \y@hyperref{#1}{#2}{#3}%
}
\def\y@hyperref#1#2#3#4{%
   #2\ref{#4}#3
   \catcode`\~ = 13
   \catcode`\$ = 3
   \catcode`\_ = 8
   \catcode`\# = 6
   \catcode`\& = 4
}
\def\QCTOpt[#1]#2{%
  \def\QCTOptB{#1}
  \def\QCTOptA{#2}
}
\def\QCTNOpt#1{%
  \def\QCTOptA{#1}
  \let\QCTOptB\empty
}
\def\Qct{%
  \@ifnextchar[{%
    \QCTOpt}{\QCTNOpt}
}
\def\QCBOpt[#1]#2{%
  \def\QCBOptB{#1}%
  \def\QCBOptA{#2}%
}
\def\QCBNOpt#1{%
  \def\QCBOptA{#1}%
  \let\QCBOptB\empty
}
\def\Qcb{%
  \@ifnextchar[{%
    \QCBOpt}{\QCBNOpt}%
}
\def\PrepCapArgs{%
  \ifx\QCBOptA\empty
    \ifx\QCTOptA\empty
      {}%
    \else
      \ifx\QCTOptB\empty
        {\QCTOptA}%
      \else
        [\QCTOptB]{\QCTOptA}%
      \fi
    \fi
  \else
    \ifx\QCBOptA\empty
      {}%
    \else
      \ifx\QCBOptB\empty
        {\QCBOptA}%
      \else
        [\QCBOptB]{\QCBOptA}%
      \fi
    \fi
  \fi
}
\def\GRAPHICSPS#1{%
 \ifcase\GRAPHICSTYPE
   \special{ps: #1}%
 \or
   \special{language "PS", include "#1"}%
 \fi
}%
\def\graffile#1#2#3#4{%
    \bgroup
	   \@inlabelfalse
       \leavevmode
       \@ifundefined{bbl@deactivate}{\def~{\string~}}{\activesoff}%
        \raise -#4 \BOXTHEFRAME{%
           \hbox to #2{\raise #3\hbox to #2{\null #1\hfil}}}%
    \egroup
}%
\def\draftbox#1#2#3#4{%
 \leavevmode\raise -#4 \hbox{%
  \frame{\rlap{\protect\tiny #1}\hbox to #2%
   {\vrule height#3 width\z@ depth\z@\hfil}%
  }%
 }%
}%
\let\nographics=\@msidraft
\newif\ifwasdraft
\def\GRAPHIC#1#2#3#4#5{%
   \ifnum\@msidraft=\@ne\draftbox{#2}{#3}{#4}{#5}%
   \else\graffile{#1}{#3}{#4}{#5}%
   \fi
}
\def\addtoLaTeXparams#1{%
    \edef\LaTeXparams{\LaTeXparams #1}}%
\newif\ifBoxFrame \BoxFramefalse
\newif\ifOverFrame \OverFramefalse
\newif\ifUnderFrame \UnderFramefalse
\def\BOXTHEFRAME#1{%
   \hbox{%
      \ifBoxFrame
         \frame{#1}%
      \else
         {#1}%
      \fi
   }%
}
\def\doFRAMEparams#1{\BoxFramefalse\OverFramefalse\UnderFramefalse\readFRAMEparams#1\end}%
\def\readFRAMEparams#1{%
 \ifx#1\end%
  \let\next=\relax
  \else
  \ifx#1i\dispkind=\z@\fi
  \ifx#1d\dispkind=\@ne\fi
  \ifx#1f\dispkind=\tw@\fi
  \ifx#1t\addtoLaTeXparams{t}\fi
  \ifx#1b\addtoLaTeXparams{b}\fi
  \ifx#1p\addtoLaTeXparams{p}\fi
  \ifx#1h\addtoLaTeXparams{h}\fi
  \ifx#1X\BoxFrametrue\fi
  \ifx#1O\OverFrametrue\fi
  \ifx#1U\UnderFrametrue\fi
  \ifx#1w
    \ifnum\@msidraft=1\wasdrafttrue\else\wasdraftfalse\fi
    \@msidraft=\@ne
  \fi
  \let\next=\readFRAMEparams
  \fi
 \next
 }%
\def\IFRAME#1#2#3#4#5#6{%
      \bgroup
      \let\QCTOptA\empty
      \let\QCTOptB\empty
      \let\QCBOptA\empty
      \let\QCBOptB\empty
      #6%
      \parindent=0pt
      \leftskip=0pt
      \rightskip=0pt
      \setbox0=\hbox{\QCBOptA}%
      \@tempdima=#1\relax
      \ifOverFrame
          \typeout{This is not implemented yet}%
          \show\HELP
      \else
         \ifdim\wd0>\@tempdima
            \advance\@tempdima by \@tempdima
            \ifdim\wd0 >\@tempdima
               \setbox1 =\vbox{%
                  \unskip\hbox to \@tempdima{\hfill\GRAPHIC{#5}{#4}{#1}{#2}{#3}\hfill}%
                  \unskip\hbox to \@tempdima{\parbox[b]{\@tempdima}{\QCBOptA}}%
               }%
               \wd1=\@tempdima
            \else
               \textwidth=\wd0
               \setbox1 =\vbox{%
                 \noindent\hbox to \wd0{\hfill\GRAPHIC{#5}{#4}{#1}{#2}{#3}\hfill}\\%
                 \noindent\hbox{\QCBOptA}%
               }%
               \wd1=\wd0
            \fi
         \else
            \ifdim\wd0>0pt
              \hsize=\@tempdima
              \setbox1=\vbox{%
                \unskip\GRAPHIC{#5}{#4}{#1}{#2}{0pt}%
                \break
                \unskip\hbox to \@tempdima{\hfill \QCBOptA\hfill}%
              }%
              \wd1=\@tempdima
           \else
              \hsize=\@tempdima
              \setbox1=\vbox{%
                \unskip\GRAPHIC{#5}{#4}{#1}{#2}{0pt}%
              }%
              \wd1=\@tempdima
           \fi
         \fi
         \@tempdimb=\ht1
         \advance\@tempdimb by -#2
         \advance\@tempdimb by #3
         \leavevmode
         \raise -\@tempdimb \hbox{\box1}%
      \fi
      \egroup%
}%
\def\DFRAME#1#2#3#4#5{%
  \hfil\break
  \bgroup
     \leftskip\@flushglue
	 \rightskip\@flushglue
	 \parindent\z@
	 \parfillskip\z@skip
     \let\QCTOptA\empty
     \let\QCTOptB\empty
     \let\QCBOptA\empty
     \let\QCBOptB\empty
	 \vbox\bgroup
        \ifOverFrame 
           #5\QCTOptA\par
        \fi
        \GRAPHIC{#4}{#3}{#1}{#2}{\z@}%
        \ifUnderFrame 
           \break#5\QCBOptA
        \fi
	 \egroup
   \egroup
   \break
}%
\def\FFRAME#1#2#3#4#5#6#7{%
  \@ifundefined{floatstyle}
    {
     \begin{figure}[#1]%
    }
    {
	 \ifx#1h
      \begin{figure}[H]%
	 \else
      \begin{figure}[#1]%
	 \fi
	}
  \let\QCTOptA\empty
  \let\QCTOptB\empty
  \let\QCBOptA\empty
  \let\QCBOptB\empty
  \ifOverFrame
    #4
    \ifx\QCTOptA\empty
    \else
      \ifx\QCTOptB\empty
        \caption{\QCTOptA}%
      \else
        \caption[\QCTOptB]{\QCTOptA}%
      \fi
    \fi
    \ifUnderFrame\else
      \label{#5}%
    \fi
  \else
    \UnderFrametrue%
  \fi
  \begin{center}\GRAPHIC{#7}{#6}{#2}{#3}{\z@}\end{center}%
  \ifUnderFrame
    #4
    \ifx\QCBOptA\empty
      \caption{}%
    \else
      \ifx\QCBOptB\empty
        \caption{\QCBOptA}%
      \else
        \caption[\QCBOptB]{\QCBOptA}%
      \fi
    \fi
    \label{#5}%
  \fi
  \end{figure}%
 }%
\def\makeactives{
  \catcode`\"=\active
  \catcode`\;=\active
  \catcode`\:=\active
  \catcode`\'=\active
  \catcode`\~=\active
}
   \gdef\activesoff{%
      \def"{\string"}%
      \def;{\string;}%
      \def:{\string:}%
      \def'{\string'}%
      \def~{\string~}%
    }
\def\FRAME#1#2#3#4#5#6#7#8{%
 \bgroup
 \ifnum\@msidraft=\@ne
   \wasdrafttrue
 \else
   \wasdraftfalse%
 \fi
 \def\LaTeXparams{}%
 \dispkind=\z@
 \def\LaTeXparams{}%
 \doFRAMEparams{#1}%
 \ifnum\dispkind=\z@\IFRAME{#2}{#3}{#4}{#7}{#8}{#5}\else
  \ifnum\dispkind=\@ne\DFRAME{#2}{#3}{#7}{#8}{#5}\else
   \ifnum\dispkind=\tw@
    \edef\@tempa{\noexpand\FFRAME{\LaTeXparams}}%
    \@tempa{#2}{#3}{#5}{#6}{#7}{#8}%
    \fi
   \fi
  \fi
  \ifwasdraft\@msidraft=1\else\@msidraft=0\fi{}%
  \egroup
 }%
\def\TEXUX#1{"texux"}
\long\def\QQQ#1#2{%
     \long\expandafter\def\csname#1\endcsname{#2}}%
\long\def\QQA#1#2{}%
\def\QTR#1#2{{\csname#1\endcsname {#2}}}%
\def\EXPAND#1[#2]#3{}%
\def\NOEXPAND#1[#2]#3{}%
\def\LaTeXparent#1{}%
\def\ChildStyles#1{}%
\def\ChildDefaults#1{}%
\def\QTagDef#1#2#3{}%
  \providecommand{\UNICODE}[2][]{\protect\rule{.1in}{.1in}}
  \providecommand{\U}[1]{\protect\rule{.1in}{.1in}}
\def\QQfnmark#1{\footnotemark}
 \def\abstract{%
  \if@twocolumn
   \section*{Abstract (Not appropriate in this style!)}%
   \else \small 
   \begin{center}{\bf Abstract\vspace{-.5em}\vspace{\z@}}\end{center}%
   \quotation 
   \fi
  }%
   \def\registered{\relax\ifmmode{}\r@gistered
                    \else$\m@th\r@gistered$\fi}%
 \def\r@gistered{^{\ooalign
  {\hfil\raise.07ex\hbox{$\scriptstyle\rm\text{R}$}\hfil\crcr
  \mathhexbox20D}}}}{}%
\newdimen\theight
\def\newfmtname{LaTeX2e}
  \DeclareOldFontCommand{\rm}{\normalfont\rmfamily}{\mathrm}
  \DeclareOldFontCommand{\sf}{\normalfont\sffamily}{\mathsf}
  \DeclareOldFontCommand{\tt}{\normalfont\ttfamily}{\mathtt}
  \DeclareOldFontCommand{\bf}{\normalfont\bfseries}{\mathbf}
  \DeclareOldFontCommand{\it}{\normalfont\itshape}{\mathit}
  \DeclareOldFontCommand{\sl}{\normalfont\slshape}{\@nomath\sl}
  \DeclareOldFontCommand{\sc}{\normalfont\scshape}{\@nomath\sc}
\def\alpha{{\Greekmath 010B}}%
\def\beta{{\Greekmath 010C}}%
\def\gamma{{\Greekmath 010D}}%
\def\delta{{\Greekmath 010E}}%
\def\epsilon{{\Greekmath 010F}}%
\def\zeta{{\Greekmath 0110}}%
\def\eta{{\Greekmath 0111}}%
\def\theta{{\Greekmath 0112}}%
\def\iota{{\Greekmath 0113}}%
\def\kappa{{\Greekmath 0114}}%
\def\lambda{{\Greekmath 0115}}%
\def\mu{{\Greekmath 0116}}%
\def\nu{{\Greekmath 0117}}%
\def\xi{{\Greekmath 0118}}%
\def\pi{{\Greekmath 0119}}%
\def\rho{{\Greekmath 011A}}%
\def\sigma{{\Greekmath 011B}}%
\def\tau{{\Greekmath 011C}}%
\def\upsilon{{\Greekmath 011D}}%
\def\phi{{\Greekmath 011E}}%
\def\chi{{\Greekmath 011F}}%
\def\psi{{\Greekmath 0120}}%
\def\omega{{\Greekmath 0121}}%
\def\varepsilon{{\Greekmath 0122}}%
\def\vartheta{{\Greekmath 0123}}%
\def\varpi{{\Greekmath 0124}}%
\def\varrho{{\Greekmath 0125}}%
\def\varsigma{{\Greekmath 0126}}%
\def\varphi{{\Greekmath 0127}}%
\def\nabla{{\Greekmath 0272}}
\def\FindBoldGroup{%
   {\setbox0=\hbox{$\mathbf{x\global\edef\theboldgroup{\the\mathgroup}}$}}%
}
\def\Greekmath#1#2#3#4{%
    \if@compatibility
        \ifnum\mathgroup=\symbold
           \mathchoice{\mbox{\boldmath$\displaystyle\mathchar"#1#2#3#4$}}%
                      {\mbox{\boldmath$\textstyle\mathchar"#1#2#3#4$}}%
                      {\mbox{\boldmath$\scriptstyle\mathchar"#1#2#3#4$}}%
                      {\mbox{\boldmath$\scriptscriptstyle\mathchar"#1#2#3#4$}}%
        \else
           \mathchar"#1#2#3#4%
        \fi 
    \else 
        \FindBoldGroup
        \ifnum\mathgroup=\theboldgroup 
           \mathchoice{\mbox{\boldmath$\displaystyle\mathchar"#1#2#3#4$}}%
                      {\mbox{\boldmath$\textstyle\mathchar"#1#2#3#4$}}%
                      {\mbox{\boldmath$\scriptstyle\mathchar"#1#2#3#4$}}%
                      {\mbox{\boldmath$\scriptscriptstyle\mathchar"#1#2#3#4$}}%
        \else
           \mathchar"#1#2#3#4%
        \fi     	    
	  \fi}
\newif\ifGreekBold  \GreekBoldfalse
\let\SAVEPBF=\pbf
\def\pbf{\GreekBoldtrue\SAVEPBF}%
  \newcounter{equationnumber}  
  \def\mathletters{%
     \addtocounter{equation}{1}
     \edef\@currentlabel{\theequation}%
     \setcounter{equationnumber}{\c@equation}
     \setcounter{equation}{0}%
     \edef\theequation{\@currentlabel\noexpand\alph{equation}}%
  }
    \def\BibTeX{{\rm B\kern-.05em{\sc i\kern-.025em b}\kern-.08em
                 T\kern-.1667em\lower.7ex\hbox{E}\kern-.125emX}}}{}%
\def\AmS{{\protect\usefont{OMS}{cmsy}{m}{n}%
                A\kern-.1667em\lower.5ex\hbox{M}\kern-.125emS}}}{}%
\def\@@eqncr{\let\@tempa\relax
    \ifcase\@eqcnt \def\@tempa{& & &}\or \def\@tempa{& &}%
      \else \def\@tempa{&}\fi
     \@tempa
     \if@eqnsw
        \iftag@
           \@taggnum
        \else
           \@eqnnum\stepcounter{equation}%
        \fi
     \fi
     \global\tag@false
     \global\@eqnswtrue
     \global\@eqcnt\z@\cr}
\def\TCItag{\@ifnextchar*{\@TCItagstar}{\@TCItag}}
\def\@TCItag#1{%
    \global\tag@true
    \global\def\@taggnum{(#1)}}
\def\@TCItagstar*#1{%
    \global\tag@true
    \global\def\@taggnum{#1}}
\begin{document}
\title[\bigskip Condensed domains]{\bigskip Condensed domains and the $%
D+XL[X]$ construction}
\author{M. Zafrullah}
\address{Department of Mathematics, Idaho State University,\\
Pocatello, Idaho, USA}
\email{mzafrullah@usa.net}
\urladdr{http:/www.lohar.com}
\thanks{This paper is my nth swan song and may well be my last.}
\subjclass[2020]{Primary 13F05, 13G05; Secondary 13B25, 13B30}
\keywords{Condensed, $t$-ideal, $t$-linkative, UMT domains}

\begin{abstract}
Let $D$ be an integral domain with quotient field $K$ and let $\mathcal{I}%
(D) $ be the set of nonzero ideals of $D$. Call, for $I,J\in \mathcal{I}(D)$%
, the product $IJ$ of ideals \emph{condensed} if $IJ=\{ij|i\in I,j\in J\}.$
Call $D$ a\emph{\ condensed domain} if for each pair $I,J$ the product $IJ$
is condensed. We show that if $a,b$ are elements of a condensed domain such
that $aD\cap bD=abD,$ then $(a,b)=D.$ It was shown in \cite{Zaf pres} that a
pre-Schreier domain is a $\ast $-domain, i.e., $D$ satisfies $\ast :$ For
every pair $\{a_{i}\}_{i=1}^{m},\{b_{j}\}_{j=1}^{n}$ of sets of nonzero
elements of $D$ we have $(\cap (a_{i}))(\cap b_{j})=\cap (a_{i}b_{j}).$ We
show that a condensed domain $D$ is pre-Schreier if and only if $D$ is a $%
\ast $-domain. We also show that if $A\subseteq B$ is an extension of
domains and $A+XB[X]$ is condensed, then $B$ must be a field and $A$ must be
condensed and in this case $[B:K]<4.$ In particular we study the necessary
and sufficient conditions for $D+XL[X]$ to be condensed, where $D$ is a
domain and $L$ an extension field of $K.$ It may be noted that if $D$ is not
a field $D[X]$ is never condensed. So for $D$ condensed $D+XK[X]$ is a way
of constructing new condensed domains from old.
\end{abstract}

\maketitle

\bigskip

Let $D$ be an integral domain with quotient field $K$ and let $\mathcal{I}%
(D) $ be the set of nonzero ideals of $D,$ throughout. Call, for $I,J\in 
\mathcal{I}(D),$ the product $IJ$ of ideals \emph{condensed} if $%
IJ=\{ij|i\in I,j\in J\}.$ We may call the ideals $I,J$ a \emph{condensed pair%
} if $IJ$ is condensed. Call $D$ a\emph{\ condensed domain} if for each $I,J$
the product $IJ$ is condensed. While we are at it, let's call an element $a$ 
\emph{subtle} if $a\in IJ$ implies that $a=ij$ where $i\in I$ and $j\in J.$
An element $a\in D$ is called irreducible or an atom if $a$ is a nonzero non
unit such that $a=xy$ implies $x$ is a unit or $y$ is, We show that if $D$
is condensed $a$ an atom and $b,c\in D$ with $(b,c)=D$, then $(a,b)=D$ or $%
(a,c)=D.$ We also show that if $a,b$ are elements of a condensed domain such
that $aD\cap bD=abD,$ then $(a,b)=D.$ Call $x\in D\backslash \{0\}$ primal
if for all $y,z\in D\backslash \{0\}$ $x|yz$ implies $x=rs$ where $r|y$ and $%
s|z.$ A domain all of whose nonzero elements are primal was called a \emph{%
pre-Schreier} domain in \cite{Zaf pres}. It was shown in \cite{Zaf pres}
that a pre-Schreier domain $D$ is a $\ast $-domain, i.e., $D$ satisfies $%
\ast :$ For every pair $\{a_{i}\}_{i=1}^{m},\{b_{j}\}_{j=1}^{n}$ of sets of
nonzero elements of $D$ we have $(\cap (a_{i}))(\cap b_{j})=\cap
(a_{i}b_{j}).$ We show that a condensed domain $D$ is pre-Schreier if and
only if $D$ is a $\ast $-domain. We also show that if $A\subseteq B$ is an
extension of domains and $A+XB[X]$ is condensed, then $B$ must be a field
and $A$ must be condensed and in this case $[B:K]<4.$ In particular we study
the necessary and sufficient conditions for $D+XL[X]$ to be condensed, where 
$D$ is a domain and $L$ an extension field of $K.$ It may be noted that if $%
D $ is not a field $D[X]$ is never condensed. So for $D$ condensed $D+XK[X]$
is a way of constructing new condensed domains from old.

Our basic tools come from the notion of star operations, as introduced in
sections 32 and 34 of Gilmer's \cite{G MIT}. For our purposes we provide
below a working introduction. Let $D$ be an integral domain with quotient
field $K$ and let $F(D)$ denote the set of fractional ideals of $D.$ Denote
by $A^{-1}$ the fractional ideal $D:_{K}A=\{x\in K|xA\subseteq D\}.$ The
function $A\mapsto A_{v}=(A^{-1})^{-1}$ on $F(D)$ is called the $v$%
-operation on $D$ (or on $F(D)).$ Associated to the $v$-operation is the $t$%
-operation on $F(D)$ defined by $A\mapsto A_{t}=\cup \{H_{v}|$ $H$ ranges
over nonzero finitely generated subideals of $A\}.$ The $v$- and $t$%
-operations are examples of the so called star operations. Indeed $%
A\subseteq A_{t}\subseteq A_{v}.$ A fractional ideal $A\in F(D)$ is called a 
$v$-ideal (resp., a $t$-ideal) if $A=A_{v}$ (resp., $A=A_{t})$ a $v$-ideal
(resp., a $t$-ideal) of finite type if there is a finitely generated ideal $%
B $ such $A=B_{v}$ (resp., $A=B_{t})$. An integral $t$-ideal maximal among
integral $t$-ideals is a prime ideal called a \emph{maximal }$t$-ideal. If $%
A $ is a nonzero integral ideal with $A_{t}\neq D$ then $A$ is contained in
at least one maximal $t$-ideal. A prime ideal that is also a $t$-ideal is
called a prime $t$-ideal. Every height one prime ideal is a $t$-ideal. Call $%
I\in F(D)$ $v$-invertible (resp., $t$-invertible) if $(II^{-1})_{v}=D$
(resp., $(II^{-1})_{t}=D).$ A prime $t$-ideal that is also $t$-invertible
was shown to be a maximal $t$-ideal in Proposition 1.3 of \cite[Theorem 1.4]%
{HZ t-inv}. Two elements $a,b\in D$ are said to be $v$-coprime if $%
(a,b)_{v}=D.$ Indeed $a,b$ are $v$-coprime if and only if $a,b$ share no
maximal $t$-ideals, if and only if $aD\cap bD=abD.$

\bigskip Let $X$ be an indeterminate over $K.$ Given a polynomial $g\in
K[X], $ let $A_{g}$ denote the fractional ideal of $D$ generated by the
coefficients of $g.$ A prime ideal $P$ of $D[X]$ is called a prime upper to $%
0$ if $P\cap D=(0).$ Thus a prime ideal $P$ of $D[X]$ is a prime upper to $0$
if and only if $P=h(X)K[X]\cap D[X],$ for a prime $h$ in $K[X].$ It follows
from \cite[Theorem 1.4]{HZ t-inv} that $P$ a prime upper to zero of $D$ is a
maximal $t$-ideal if and only if $P$ is $t$-invertible if and only if $P$
contains a polynomial $f$ such that $(A_{f})_{v}=D.$ A domain $D$ all of
whose prime uppers to zero are maximal $t$-ideals called a \emph{UMT domain}%
, \cite[Theorem 1.4]{HZ t-inv}. Our terminology is standard as in \cite{G
MIT} or is defined at the point of entry of the notion. We plan to split the
paper into two sections. In Section \ref{S1}, we collect basic properties of
condensed domains, some of which are known, some known with simpler proofs
and some new. Anderson and Dumitrescu in \cite{ADu} studied condensedness
for domains of the form $K+X^{r}L[[X]]$ where $K\subseteq L$ is an extension
of fields. In Section \ref{S2}, we study when a ring of the form $A+XB[X]$
is condensed and find the necessary and sufficient conditions for $D+XL[X]$
to be condensed, where $L$ is an extension field of $K.$ In particular we
show that $D$ is condensed if and only if $D+XK[X]$ is condensed.

\section{Basics\label{S1}}

An integrally closed pre-Schreier domain was originally called a Schreier
domain in \cite{C Bez} where it was indicated that the group of divisibility
of a Schreier domain is a Riesz group. Since the conclusion was based on the
fact that the nonzero elements of a Schreier domain are primal, one can
conclude that the group of divisibility of a pre-Schreier domain is a Riesz
group too. In an earlier version of \cite{Zaf pres} this author indicated
that one of the group theoretic characterizations of Riesz groups did not
translate to domains as a characterization of pre-Schreier domains. The
reason, in this author's opinion was the difference between the notions of
products of ideals in semigroups and in rings, see Section 2 of \cite{Zaf
pres}. This observation was related in an earlier version of \cite{Zaf pres}%
. Following the lead from that earlier version, D. F. Anderson and D. E.
Dobbs \cite{AD} introduced the concept of a condensed integral domain, as
defined in the introduction of this paper, see Corollary 2.6 of \cite{AD}.
They showed that $D$ is condensed if and only if every pair of two generated
ideals of $D$ are a condensed pair, if and only if every pair of finitely
generated ideals is a condensed pair, and that every overring of a condensed
domain is condensed. They also showed that a condensed domain $D$ has $%
Pic(D)=0$. Also, they showed that if a domain $D$ is not a field then $D[X]$
is not condensed and that if $F$ is a field $F[[X^{2}X^{3}]]$, is a
condensed domain. Later, Anderson, J. T. Arnold and Dobbs \cite{AAD} showed
that an integrally condensed domain is Bezout. A number of other researchers
have worked on concepts related to condensedness. An interested reader may
find \cite{ADu} a good source of information on this topic.

\bigskip

Lemma \label{Lemma A} Let $D$ be a condensed domain and let $a$ be an atom
in $D.$

\begin{enumerate}
\item If $b,c$ are co-maximal non-units of $D,$ then $a$ is co-maximal with $%
b$ or with $c.$

\item $a$ belongs to a unique maximal ideal of $D.$
\end{enumerate}

\begin{proof}
1. (1) Let $I=(a,b)$ and $J=(a,c).$ Then $IJ=(a^{2},ac,ab,bc)=(a,bc).$
Because $D$ is condensed, $a=(ra+sb)(ua+vc).$ So, $ua+vc$ is a unit or $%
ra+sb $ is.

2. (2) If $D$ is quasi local, then, clearly, $a$ belongs to a unique maximal
ideal. So let's assume that $D$ is non-local and that $a$ belongs to two
maximal ideals $M$ and $N.$ Let $m\in M\backslash N.$ So that $mD+N=D.$ That
is for some $n\in N$ we have $m,n$ co-maximal. By (1), $a$ is either
co-maximal with $m$ or with $n.$ But that is impossible because $a$ belongs
to both $M$ and $N.$ Whence $a$ belongs to a unique maximal ideal.
\end{proof}

As we shall see below $v$-coprime elements are co-maximal in a condensed
domain. For this we begin by recalling from \cite{DHLZ} some terminology. By
an overring of $D$ we mean a ring between $D$ and its quotient field $K.$
Let $D\subseteq R$ be an extension of domains. Then $R$ is said to be $t$%
-linked over $D$ if for each nonzero ideal $I$ of $D$ with $I^{-1}=D$ we
have $(IR)^{-1}=R$ and $D$ is $t$-linkative if every overring of $D$ is $t$%
-linked over $D.$

\begin{lemma}
\label{Lemma A1}Let $D$ be condensed and let $a,b$ be two nonzero non units
of $D.$ Then the following hold. (a) If $(a,b)_{v}=D$, then $(a,b)=D$. and
(b) If $I$ is a $t$-invertible ideal of $D,$ then $I$ is invertible and
hence principal.
\end{lemma}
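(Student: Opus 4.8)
The plan is to prove (a) directly from condensedness and to deduce (b) from (a) together with $Pic(D)=0$ (Anderson--Dobbs, quoted above) and the definition of the $t$-operation. The common core is the assertion that \emph{in a condensed domain a finitely generated ideal $H$ with $H^{-1}=D$ must equal $D$}; part (a) is its two-generated instance, since by the working introduction $(a,b)_{v}=D$ is equivalent to $aD\cap bD=abD$, and the latter says precisely that $(a,b)^{-1}=\frac{1}{ab}(aD\cap bD)=D$.

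For (a) I would argue by computation inside $(a,b)^{2}=(a^{2},ab,b^{2})$. First note that $aD\cap bD=abD$ gives $a^{2}D\cap bD=a^{2}bD$ and $aD\cap b^{2}D=ab^{2}D$ at once. Since $D$ is condensed, $a^{2}+b^{2}=(\alpha a+\beta b)(\gamma a+\delta b)$ for some $\alpha,\beta,\gamma,\delta\in D$; expanding yields $(\alpha\gamma-1)a^{2}+(\alpha\delta+\beta\gamma)ab+(\beta\delta-1)b^{2}=0$, and since $(\alpha\gamma-1)a^{2}$ is a multiple of $b$ it lies in $a^{2}D\cap bD=a^{2}bD$, so $\alpha\gamma-1\in bD$; symmetrically $\beta\delta-1\in aD$. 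Hence $(\alpha,b)=(\gamma,b)=D$ and $(\beta,a)=(\delta,a)=D$, so putting $i:=\alpha a+\beta b$ we get $(i,a)=(\beta b,a)=(a,b)$ and $(i,b)=(\alpha a,b)=(a,b)$; moreover $i$ is $v$-coprime to $a$ and to $b$, hence to $ab$, so $iD\cap abD=i\,ab\,D$. Applying condensedness again, this time to $(i,a)(i,b)=(a,b)^{2}$ and to a suitable element of it such as $i^{2}+ab$, and exploiting $iD\cap abD=i\,ab\,D$ exactly as above, produces fresh coprimality relations for the new coefficients; iterating this bookkeeping should yield $1\in(a,b)$.

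For (b), let $I$ be $t$-invertible, so $1\in(II^{-1})_{t}$ and hence there is a finitely generated ideal $H\subseteq II^{-1}\subseteq D$ with $1\in H_{v}$, i.e.\ $H^{-1}=D$. By the core assertion, $H=D$; thus $D=H\subseteq II^{-1}\subseteq D$, so $II^{-1}=D$ and $I$ is invertible, hence finitely generated. Since $Pic(D)=0$ for a condensed domain, $I$ is principal. (One also recovers (a) from (b): if $(a,b)_{v}=D$ then $(a,b)$ is finitely generated with $(a,b)^{-1}=D$, so it is $t$-invertible, hence by (b) principal, say $(a,b)=dD$; then $dD=(a,b)_{v}=D$, so $d$ is a unit and $(a,b)=D$.)

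The main obstacle is the core assertion. In the two-generated case the difficulty is to verify that the iteration sketched above actually terminates with $1\in(a,b)$: each use of condensedness, combined with an identity of the form $xD\cap yD=xyD$, replaces one pair of elements by a new pair carrying more coprimality relations, and one must prove this closes up rather than recurses forever. After that one must pass from two generators to arbitrarily many, presumably by induction on the number of generators, using condensedness to control a two-generated subideal at each step. I expect the two-generated case to be the technical heart, with everything else formal.
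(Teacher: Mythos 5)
Your plan rests entirely on the ``core assertion'' that in a condensed domain a finitely generated ideal $H$ with $H^{-1}=D$ must equal $D$, and that assertion is never actually proved: the elementwise computation for the two-generated case ends with ``iterating this bookkeeping should yield $1\in(a,b)$,'' and you yourself flag that you cannot show the iteration terminates; the passage from two generators to finitely many (which part (b) genuinely needs, since the finitely generated $H\subseteq II^{-1}$ with $H_v=D$ need not be two-generated) is also only ``presumably by induction.'' This is a genuine gap, not a routine verification: each application of condensedness introduces fresh unknown coefficients constrained only by congruences modulo $a$ and $b$, and nothing in the setup forces the process to close up --- there is no decreasing quantity and no mechanism visibly producing $1\in(a,b)$. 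The individual steps you do carry out (that $(a,b)_v=D$ gives $a^2D\cap bD=a^2bD$, hence $\alpha\gamma-1\in bD$, hence $(i,a)=(i,b)=(a,b)$ with $i$ $v$-coprime to $ab$) are correct, but they merely reproduce the hypothesis for a new element and do not advance toward the conclusion.

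The paper avoids this computation altogether. It quotes that every overring of a condensed domain is condensed \cite{AD} and that an integrally closed condensed domain is Bezout \cite{AAD}, so the integral closure of $D$ is Prüfer; by Theorem 2.4 and Lemma 2.1 of \cite{DHLRZ} this makes $D$ a $t$-linkative UMT domain, i.e.\ every maximal ideal of $D$ is a $t$-ideal. Then (a) is immediate: if $(a,b)\subseteq M$ for a maximal ideal $M$, then $D=(a,b)_v\subseteq M_t=M$, a contradiction; and in (b), if $II^{-1}\neq D$ it lies in a maximal ($t$-)ideal, contradicting $(II^{-1})_t=D$, so $I$ is invertible and hence principal by Proposition 2.5 of \cite{AD} --- the one ingredient ($\mathrm{Pic}(D)=0$) your proposal correctly shares with the paper. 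Replacing your unfinished iteration with the fact that every maximal ideal of a condensed domain is a $t$-ideal is the missing idea; without it, or a completed termination argument, the proof does not go through.
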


\begin{proof}
(a) Every overring of a condensed domain is condensed by \cite{AD} and an
integrally closed condensed domain is Bezout by \cite{AAD}, as already
noted. So the integral closure of a condensed domain is Bezout, hence
Prufer. Thus $D$ is a $t$-linkative UMT domain, by Theorem 2.4 of \cite%
{DHLRZ} and every maximal ideal of $D$ is a $t$-ideal by Lemma 2.1 of \cite%
{DHLRZ}. Now let $(x,y)_{v}=D.$ Claim that $(x,y)=D.$ For if not then $(x,y)$
is contained in a maximal ideal $M$ of $D.$ But then $D=(x,y)_{v}\subseteq M$
a contradiction. For (b), let $II^{-1}\neq D.$ Then, being a proper integral
ideal, $II^{-1}$ is contained in a maximal ideal $M$. Now because the
integral closure of $D$ is Bezout every maximal ideal of $D$ is a maximal $t$%
-ideal as already noted. But then $II^{-1}\subseteq M$ gives a contradiction
as above. Whence $I$ is invertible. But an invertible ideal in a condensed
domain is principal, by Proposition 2.5 of \cite{AD}.
\end{proof}

There is another interesting application of the above observations. But let
us first record a simple fact which may be folklore, though I have not seen
it.

\begin{proposition}
\label{Proposition A2}An atom $a$ in a domain $D$ is a prime if and only if
for all $b\in D,$ $a\nmid b$ implies $(a,b)_{v}=D.$ Consequently, an atom in
a condensed domain is a prime if and only if for all $b\in D,a\nmid b$
implies $(a,b)=D.$
\end{proposition}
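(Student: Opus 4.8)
The plan is to prove the first (non-condensed) statement directly from the definitions of prime and atom, and then derive the "consequently" clause by invoking Lemma~\ref{Lemma A1}(a).

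For the forward direction of the first statement, suppose $a$ is a prime atom and $b \in D$ with $a \nmid b$. I want to show $(a,b)_v = D$, equivalently that $a$ and $b$ share no maximal $t$-ideal (using the characterization recalled in the introduction). Since $a$ is prime, $aD$ is a prime ideal, and being generated by an atom it is minimal nonzero — in fact every height-one prime is a $t$-ideal and $aD$ is height one (or one can argue directly that $aD$ is a maximal $t$-ideal among those containing $a$, but the cleanest route is: $(a,b)^{-1} = (1/a)D \cap (1/b)D$... actually the cleanest is to show $(a,b)_v \ne D$ forces $a \mid b$). So suppose $(a,b)_v \ne D$; then $(a,b)$ sits inside some maximal $t$-ideal $M$. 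Now $a \in M$ and $aD$ is prime of height one, so $aD \subseteq M$ with $aD$ a prime $t$-ideal... I would instead run the argument through $v$-coprimality: $(a,b)_v = D \iff aD \cap bD = abD$. If $aD \cap bD \ne abD$, pick $c \in (aD \cap bD)\setminus abD$; write $c = a x = b y$. Then $a \mid by$, and since $a$ is prime either $a \mid b$ (done, contradicting the hypothesis in the forward direction — wait, we're proving the forward direction, so this is what we want to rule out) or $a \mid y$, say $y = a y'$, giving $c = ba y' \in abD$, contradiction. Hence $aD \cap bD = abD$, i.e. $(a,b)_v = D$.

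For the converse of the first statement, suppose that for all $b$, $a \nmid b$ implies $(a,b)_v = D$. To show $a$ is prime, take $a \mid yz$ with $a \nmid y$; I must show $a \mid z$. By hypothesis $(a,y)_v = D$, so $aD \cap yD = ayD$. Since $a \mid yz$, we have $yz \in aD$, and obviously $yz \in yD$, so $yz \in aD \cap yD = ayD$; thus $yz = a y w$ for some $w$, and cancelling $y$ gives $z = aw$, i.e. $a \mid z$. Hence $aD$ is prime, so $a$ is a prime atom.

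Finally, for the "consequently" clause: when $D$ is condensed, Lemma~\ref{Lemma A1}(a) gives that for nonzero nonunits $a,b$, $(a,b)_v = D \iff (a,b) = D$. (If $b$ is a unit, $a \nmid b$ never holds for the nonunit $a$, so that case is vacuous; if $b = 0$ then $a \mid b$, also vacuous.) Substituting this equivalence into the characterization just proved yields: an atom $a$ in a condensed domain is prime iff for all $b \in D$ with $a \nmid b$ we have $(a,b) = D$. I expect the main obstacle to be purely expository — making sure the degenerate cases ($b$ a unit or zero) are handled so that the equivalence $(a,b)_v=D \Leftrightarrow aD\cap bD=abD$ is applied only where it is valid; the algebraic content is the short cancellation arguments above.
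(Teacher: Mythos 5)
Your proof is correct; it coincides with the paper's argument in one direction and in the ``consequently'' step, and takes a mildly different route in the other direction. Your proof that a prime atom is $v$-coprime to any $b$ it does not divide---showing elementwise that $aD\cap bD\subseteq abD$ via $c=by$, $a\mid y$---is exactly the paper's argument ($h=bt$, $a\mid t$, hence $h\in abD$, so $(a)\cap(b)=(ab)$ and $(a,b)^{-1}=D$). For the converse the paper stays inside the $v$-operation, computing $(a)=(a,xy)_{v}=(a,ay,xy)_{v}=(a,(a,x)_{v}y)_{v}=(a,y)_{v}$ and reading off $a\mid y$; you instead translate $(a,y)_{v}=D$ into $aD\cap yD=ayD$ (the equivalence recalled in the paper's introduction) and cancel $y$ from $yz=ayw$. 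Both are short; yours avoids the star-operation identity $(A+B)_{v}=(A+B_{v})_{v}$ at the cost of invoking the $v$-coprimality characterization $aD\cap bD=abD$, which the paper treats as standard, so nothing essential is lost. The ``consequently'' clause is handled identically in both, via Lemma \ref{Lemma A1}(a). One small slip in your degenerate-case remark: if $b$ is a unit then $a\nmid b$ in fact always holds (an atom cannot divide a unit), not ``never holds''; the case is harmless anyway, since for a unit $b$ one has $(a,b)=D=(a,b)_{v}$ trivially and Lemma \ref{Lemma A1} is not needed there, so your conclusion stands.
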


\begin{proof}
Suppose for all $b\in D$ $a\nmid b$ implies $(a,b)_{v}=D.$ Then for all $%
x,y\in D$ $a|xy$ implies $a$ divides $x$ or $a|y$. For if $a\nmid x,$ then $%
(a,x)_{v}=D$ by the condition. Yet as $a|xy$ we have $(a)=(a,xy).$ This
implies $%
(a)=(a,xy)_{v}=(a,ay,xy)_{v}=(a,(ay,xy))_{v}=(a,(a,x)y)_{v}=(a,(a,x)_{v}y)_{v}=(a,y)_{v}. 
$ Now $(a)=(a,y)_{v}$ implies $y\in (a)$ which is equivalent to $a|y.$
Conversely suppose that $a$ is a prime and $a\nmid b$ for some, chosen, $b$.
Then for each $h\in (a)\cap (b)$ we have $h=bt$ for $t\in D.$ Since $a\nmid
b $ we have $a|t.$ But then $t=at^{\prime }$ for some $t^{\prime }\in D$ and
so, for each $h\in (a)\cap \left( b\right) $ we have $h=abt^{\prime }$. But
this means $(a)\cap (b)=(ab)$ or $\frac{(a)\cap (b)}{ab}=D,$ or $%
(a,b)^{-1}=D $ which is equivalent to saying that $(a,b)_{v}=D.$ The
"consequently" part follows from the fact that in a condensed domain $%
(a,b)_{v}=D$ is equivalent to $(a,b)=D,$ by Lemma \ref{Lemma A1}.
\end{proof}

The above Proposition can be put to use immediately as follows.

\begin{corollary}
\label{Corollary A3} The following are equivalent for an atom $a$ in an
integral domain $D.$
\end{corollary}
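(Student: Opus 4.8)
The plan is to run the equivalences off Proposition~\ref{Proposition A2} together with two elementary facts about atoms, so that essentially no new computation is needed. First I would dispose of the equivalence between ``$a$ is prime'' and ``$a$ is primal'': a prime element is always primal, and conversely, if $a$ is primal and $a\mid yz$, write $a=rs$ with $r\mid y$ and $s\mid z$; since $a$ is an atom one of $r,s$ is a unit, so $a$ is, up to associates, a divisor of $y$ or of $z$. Thus for atoms primality and primality-as-a-divisor (primality) coincide.

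Next I would record that the conditions ``$(a,b)_{v}=D$'', ``$(a,b)^{-1}=D$'' and ``$aD\cap bD=abD$'' are mutually equivalent for every pair $a,b$, via the identity $(a,b)^{-1}=\tfrac{1}{ab}\bigl(aD\cap bD\bigr)$; this is precisely the chain already used inside the proof of Proposition~\ref{Proposition A2} and in the preliminary remarks on $v$-coprimality. Consequently the ``for all $b$ with $a\nmid b$'' versions of these three statements are literally the same statement, and Proposition~\ref{Proposition A2} identifies that statement with primality of $a$. This handles all the $v$-theoretic items on the list in one stroke.

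If, as the placement after Proposition~\ref{Proposition A2} suggests, some items are stated for a condensed $D$ (e.g. ``$a\nmid b\Rightarrow (a,b)=D$'' or the co-maximality formulation), I would invoke Lemma~\ref{Lemma A1}(a) to trade $(a,b)_{v}=D$ for $(a,b)=D$ throughout — which is exactly the ``consequently'' clause of Proposition~\ref{Proposition A2} — and, for any item phrased through the unique maximal ideal supplied by Lemma~\ref{Lemma A}, note that $a$ prime makes $(a)$ a prime ideal sitting inside that maximal ideal, while the converse follows by feeding co-maximal witnesses into Lemma~\ref{Lemma A}(1) as in the proof of part~(2) of that lemma.

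The only real work is bookkeeping: matching each listed condition to the correct link in the chain prime $\Leftrightarrow$ primal $\Leftrightarrow$ [$(a,b)_v=D$ for $a\nmid b$] $\Leftrightarrow$ [$aD\cap bD=abD$ for $a\nmid b$] $\Leftrightarrow$ [$(a,b)=D$ for $a\nmid b$, when $D$ is condensed]. I do not expect a genuine obstacle here — the substantive content lives in Proposition~\ref{Proposition A2} and Lemma~\ref{Lemma A1}, so the corollary is a matter of citing them in the right order, with the atom/primal equivalence inserted by hand.
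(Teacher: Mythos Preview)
Your proposal targets the wrong list of equivalences. The items actually enumerated after the corollary are not the primal/$v$-coprime/condensed variants you guessed; they are:
(1) $a$ is prime;
(2) $(a)$ is a maximal $t$-ideal;
(3) whenever $a$ lies in a prime ideal $P$, there is a maximal $t$-ideal containing $a$ and contained in $P$;
(4) every prime $t$-ideal containing $a$ equals $(a)$ and is a maximal $t$-ideal.
None of these mention primal elements, $aD\cap bD$, or the condensed hypothesis, and Lemma~\ref{Lemma A} and Lemma~\ref{Lemma A1} play no role here. So the chain you set up, while internally coherent, simply does not touch the statements at hand.

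The substantive implication in the paper is $(1)\Rightarrow(2)$: one notes that $(a)$, being principal, is already a $t$-ideal, picks a maximal $t$-ideal $M\supseteq(a)$, and argues that any $x\in M\setminus(a)$ would satisfy $a\nmid x$, hence $(a,x)_v=D$ by Proposition~\ref{Proposition A2}, contradicting $M_t=M$. Thus $M=(a)$. The remaining implications are immediate: $(2)\Rightarrow(3)$ since $(a)\subseteq P$; $(3)\Rightarrow(4)$ since a maximal $t$-ideal inside a prime $t$-ideal must equal it, forcing $P=(a)$; and $(4)\Rightarrow(1)$ since $(a)$ is then prime. Your use of Proposition~\ref{Proposition A2} is the right instinct, but it must be deployed to force maximality of $(a)$ among $t$-ideals, not to cycle through the $v$-coprimality reformulations.
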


(1) $a$ is a prime,

(2) $a$ generates a maximal $t$-ideal,

(3) if $a$ belongs to a prime ideal $P,$ then $a$ belongs to a maximal $t$%
-ideal contained in $P$

(4) if $a$ belongs to a prime $t$-ideal $P$ then $P$ is a maximal $t$-ideal
generated by $a.$

\begin{proof}
(1) $\Rightarrow $ (2). Let $\wp =(a)=\{ar|r\in D\}.$ Obviously, being a
principal ideal $(a)$ is a $t$-ideal. Let $M$ be a maximal $t$-ideal
containing $\wp $ and let $x\in M\backslash \wp $ and so $a\nmid x,$ by
construction. But by Proposition \ref{Proposition A2} $a\nmid x$ implies
that $(a,x)_{v}=D$ and this contradicts the assumption that $M$ is a $t$%
-ideal. Whence there is no $x\in M\backslash \wp $ and $\wp =M$ a maximal $t$%
-ideal.

(2) $\Rightarrow $ (3). Because $a\in P$ implies that $(a)\subseteq P$ and
by (2) $(a)$ is a maximal $t$-ideal.

(3) $\Rightarrow $ (4). Obvious because $a\in P$ implies that $(a)\subseteq
P $ and $(a)$ is a maximal $t$-ideal. Whence $(a)=P.$

(4) $\Rightarrow $ (1). Obvious because $a$ generates a prime.
\end{proof}

Note here that for an atom $a,$ $a\nmid b$ does not necessarily mean that $%
(a,b)_{v}=D.$ For example, let $D$ be a one-dimensional (Noetherian) local
domain and let $a,b$ be two non-associate atoms. Then $a\nmid b$ yet $%
(a,b)_{v}\neq D$ for the following two reasons. First: $a|b^{n}$ for some
positive integer $n,$ because $D$ is quasi-local and one dimensional and $%
(a,b)_{v}=D$ if and only if $(a,b^{n})_{v}$ for every positive integer $n$
(cf. \cite{Zaf v}). And second: $D$ is a one dimensional quasi-local domain
and so its maximal ideal is a $t$-ideal. For a concrete example note that if 
$F$ is a field and $X$ an indeterminate over $F,$ then $D=F[[X^{2},X^{3}]]$
is a one dimensional (Noetherian) local domain and of course $X^{2}$ and $%
X^{3}$ are two non associate atoms. (This domain is condensed, as already
noted.)

\begin{corollary}
\label{Corollary A4} In each of the following situations every prime element
generates a maximal ideal. (a) When every maximal ideal of $D$ is a $t$%
-ideal, i.e. when $D$ is $t$-linkative \cite{DHLZ}. (b) When $D$ has a
Prufer integral closure \cite{DHLRZ}.
\end{corollary}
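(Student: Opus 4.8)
The plan is to derive both statements directly from Corollary \ref{Corollary A3}, specifically the implication (3) $\Rightarrow$ (1) together with (1) $\Leftrightarrow$ (2). Recall that a prime element $p$ is automatically an atom, so Corollary \ref{Corollary A3} applies to it. The content to be extracted is: in each of the two stated situations, a prime element generates not merely a maximal $t$-ideal but a maximal ideal.

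For part (a), suppose every maximal ideal of $D$ is a $t$-ideal, and let $p$ be a prime element. By Corollary \ref{Corollary A3}, (1) $\Rightarrow$ (2), the ideal $(p)$ is a maximal $t$-ideal. Now let $M$ be any maximal ideal of $D$ containing $(p)$; such an $M$ exists since $(p)$ is proper. By hypothesis $M$ is a $t$-ideal, so $(p) \subseteq M$ with $(p)$ maximal among $t$-ideals forces $(p) = M$. Hence $(p)$ is a maximal ideal.

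For part (b), suppose the integral closure of $D$ is Pr\"ufer. This is the exact hypothesis appearing in the argument of Lemma \ref{Lemma A1}(b): when the integral closure is Pr\"ufer (a fortiori when it is B\'ezout, but Pr\"ufer suffices for the $t$-ideal statement), every maximal ideal of $D$ is a maximal $t$-ideal. One should cite \cite{DHLRZ} for this implication, as is done in the proof of Lemma \ref{Lemma A1}. Once this is in hand, part (b) reduces to part (a): every maximal ideal being a $t$-ideal, the same argument shows a prime element $p$ has $(p)$ simultaneously a maximal $t$-ideal (by Corollary \ref{Corollary A3}) and contained in some maximal ideal, which must then equal $(p)$.

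The only potential subtlety is making sure the citation in part (b) delivers precisely what is needed. The paper already invokes, in Lemma \ref{Lemma A1}, the fact that a Pr\"ufer (indeed B\'ezout) integral closure implies every maximal ideal is a maximal $t$-ideal, so this is available and no new work is required; the corollary is then essentially a one-line consequence of the machinery already assembled. I do not anticipate any genuine obstacle—this is a packaging result that records, in quotable form, what Corollary \ref{Corollary A3} and the $t$-linkative/Pr\"ufer dictionary already give.
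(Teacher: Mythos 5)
Your proof is correct and follows essentially the same route as the paper: the paper's own proof simply cites Theorem 2.6 of \cite{DHLZ} for (a) (the characterization of $t$-linkative as every maximal ideal being a $t$-ideal) and reduces (b) to (a) via Theorem 2.4 of \cite{DHLRZ} (Pr\"ufer integral closure $\Leftrightarrow$ $t$-linkative UMT domain), with the role of Corollary \ref{Corollary A3} left implicit. You have merely written out in full the maximality comparison ($(p)$ a maximal $t$-ideal contained in a maximal ideal that is a $t$-ideal forces equality) that the paper treats as obvious.
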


\begin{proof}
Observe that (a) is obvious by Theorem 2.6 of \cite{DHLZ} and for (b) one
can recall from Theorem 2.4 of \cite{DHLRZ}, that $D$ has Prufer integral
closure if and only if $D$ is a $t$-linkative UMT domain.
\end{proof}

Now as we know that the integral closure of a condensed domain is Bezout we
have for the record the following corollary.

\begin{corollary}
\label{Corollary A5} In a condensed domain, every prime element generates a
maximal ideal and consequently $D[[X]]$ is a condensed domain if and only if 
$D$ is a field.
\end{corollary}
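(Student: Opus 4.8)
The plan is to combine the preceding corollaries with the known structural facts about condensed domains. First I would observe that the first assertion — every prime element of a condensed domain $D$ generates a maximal ideal — is immediate from Corollary \ref{Corollary A4}(b): it was already noted (in the proof of Lemma \ref{Lemma A1}) that the integral closure of a condensed domain is Bezout, hence Pr\"ufer, so the hypothesis of Corollary \ref{Corollary A4}(b) is met and the conclusion follows. (Alternatively, one may invoke Corollary \ref{Corollary A4}(a), since a condensed domain is $t$-linkative, again as established in the proof of Lemma \ref{Lemma A1}.)

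For the second, "consequently," assertion, the backward direction is the trivial one: if $D=K$ is a field then $D[[X]]=K[[X]]$ is a DVR, in particular a PID, and every PID is condensed (indeed every pair of ideals is generated by single elements $a,b$, and $(a)(b)=(ab)=\{ab\cdot r\mid r\in D\}$ with $ab=a\cdot b$), so $D[[X]]$ is condensed. The forward direction is the substantive one. Suppose $D[[X]]$ is condensed. In $D[[X]]$ the element $X$ is a nonzero nonunit, and it is prime: the quotient $D[[X]]/(X)\cong D$ is a domain. By the first part of this corollary, applied to the condensed domain $D[[X]]$ and its prime element $X$, the ideal $(X)$ must be maximal in $D[[X]]$. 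But $D[[X]]/(X)\cong D$, so this forces $D$ to be a field.

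The step I expect to be the main (in fact the only real) obstacle is making sure the "already noted" facts are genuinely in hand at this point in the paper: namely that the integral closure of a condensed domain is Bezout (hence Pr\"ufer) and that a condensed domain is therefore $t$-linkative. Both were invoked in the proof of Lemma \ref{Lemma A1}, with references to \cite{AAD}, \cite{AD}, and \cite{DHLRZ}, so they are available. Given those, the corollary is a short deduction: no new ideal-theoretic computation is needed beyond the identification $D[[X]]/(X)\cong D$ and the observation that $X$ is prime in $D[[X]]$, and everything else is a citation of Corollary \ref{Corollary A4} and the elementary fact that a field has a PID (hence condensed) power series ring.
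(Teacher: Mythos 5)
Your proof is correct and follows essentially the same route the paper intends: the first assertion comes from Corollary \ref{Corollary A4}(b) together with the already-noted fact that the integral closure of a condensed domain is Bezout (hence Pr\"ufer), and the $D[[X]]$ statement then follows by applying the first part to the prime element $X$ of $D[[X]]$ and using $D[[X]]/(X)\cong D$, exactly mirroring the paper's argument for $D[X]$ in Proposition \ref{Proposition B}. No gaps; the only microscopic slip is in your parenthetical justification that a PID is condensed (an element of $(a)(b)=(ab)$ is $abr=(ar)\cdot b$), which does not affect the argument.
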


As already mentioned, Cohn \cite{C Bez} called an integrally closed integral
domain $D$ Schreier if each nonzero element of $D$ is primal. A domain whose
nonzero elements are primal was called pre-Schreier in \cite{Zaf pres}. Note
that in a pre-Schreier domain every irreducible element (atom) is a prime.
(In fact a primal atom in any domain, is prime. For let $p$ be an
irreducible element that is also primal and let $p|ab.$ So $p=rs$ where $r|a$
and $s|b$, because $p$ is primal. But as $p$ is also an atom, $r$ is a unit
or $s$ is a unit. Whence $p|a$ or $p|b$. In studying pre-Schreier domains, I
came across a property that I called the property $\ast $. It was defined in
the introduction.

It was shown in Theorem 1.6 of \cite{Zaf pres} that $D$ is a pre-Schreier
domain if and only if for each pair $\{a_{i}\}_{i=1}^{m},\{b_{j}\}_{j=1}^{n}$
of sets of nonzero elements of $D$ and for all $x\in D\backslash \{0\}$ $%
a_{i}b_{j}|x$ implies $x=rs$ where $a_{i}|r$ and $b_{j}|s,$ $i=1...m$ and $%
j=1...n.$ This result can be used to prove the following proposition.

\begin{proposition}
\label{Proposition A6}A domain $D$ is a pre-Schreier domain if and only if $%
D $ is a $\ast $-domain such that for every pair $\{a_{i}\}_{i=1}^{m},%
\{b_{j}\}_{j=1}^{n}$ of sets of nonzero elements of $D$ $(\cap
(a_{i})),(\cap (b_{j}))$ is a condensed pair.
\end{proposition}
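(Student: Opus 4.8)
The plan is to prove the equivalence by playing off the characterization of pre-Schreier domains in terms of $\ast$ together with the following two facts: (i) a $\ast$-domain is exactly a domain satisfying the ``divisibility-to-$\ast$'' condition of Theorem 1.6 of \cite{Zaf pres} \emph{without} the factorization conclusion, and (ii) the ``condensed pair'' hypothesis on $(\cap(a_i)),(\cap(b_j))$ is precisely what upgrades membership in the product ideal $(\cap(a_i))(\cap(b_j))=\cap(a_ib_j)$ to an actual product of two elements, one from each factor.

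First I would prove the forward implication. If $D$ is pre-Schreier, then $D$ is a $\ast$-domain by Theorem 1.6 of \cite{Zaf pres} (this is exactly the statement quoted in the excerpt, specialized appropriately). It remains to show that for each pair of finite sets $\{a_i\},\{b_j\}$ the ideals $\cap(a_i)$ and $\cap(b_j)$ form a condensed pair. So let $h\in (\cap(a_i))(\cap(b_j))$. Since $D$ is a $\ast$-domain, $(\cap(a_i))(\cap(b_j))=\cap(a_ib_j)$, so $a_ib_j\mid h$ for all $i,j$. By the refined version of Theorem 1.6 of \cite{Zaf pres} (the one quoted just before the proposition), $a_ib_j\mid h$ for all $i,j$ gives $h=rs$ with $a_i\mid r$ for all $i$ and $b_j\mid s$ for all $j$; that is, $r\in\cap(a_i)$ and $s\in\cap(b_j)$, so $h$ is a product of the required form. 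Hence $(\cap(a_i)),(\cap(b_j))$ is a condensed pair.

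For the converse, assume $D$ is a $\ast$-domain in which every such pair $(\cap(a_i)),(\cap(b_j))$ is condensed; I must deduce that $D$ is pre-Schreier. I would verify the divisibility criterion of Theorem 1.6 of \cite{Zaf pres}: given finite sets $\{a_i\}_{i=1}^m$, $\{b_j\}_{j=1}^n$ of nonzero elements and $x\in D\setminus\{0\}$ with $a_ib_j\mid x$ for all $i,j$, I must produce $r,s$ with $a_i\mid r$ (all $i$), $b_j\mid s$ (all $j$) and $x=rs$. From $a_ib_j\mid x$ for all $i,j$ we get $x\in\cap(a_ib_j)$, and the $\ast$ property rewrites this as $x\in(\cap(a_i))(\cap(b_j))$. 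The condensed-pair hypothesis now yields $x=rs$ with $r\in\cap(a_i)$ and $s\in\cap(b_j)$, i.e. $a_i\mid r$ for all $i$ and $b_j\mid s$ for all $j$, which is exactly the desired conclusion. By Theorem 1.6 of \cite{Zaf pres}, $D$ is pre-Schreier.

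The only point that needs care---and the step I expect to be the main obstacle---is making sure the two directions use the \emph{matching} form of Theorem 1.6 of \cite{Zaf pres}: the statement quoted in the excerpt already packages the ``$a_ib_j\mid x$ implies $x=rs$ with $a_i\mid r$, $b_j\mid s$'' condition as equivalent to being pre-Schreier, so both implications reduce to a clean translation between ``$x$ lies in the intersection ideal'' and ``$x$ factors appropriately,'' with the $\ast$-identity $\cap(a_ib_j)=(\cap(a_i))(\cap(b_j))$ used to pass between the two sides. One should also note the degenerate cases ($x$ a unit, or some $a_i,b_j$ units) are harmless, since then the relevant principal ideals are all of $D$ and the factorizations are trivial.
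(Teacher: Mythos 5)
Your proof is correct and follows essentially the same route as the paper: both directions hinge on translating between "$a_ib_j\mid x$ for all $i,j$" and "$x\in\cap(a_ib_j)$", using the $\ast$-identity $\cap(a_ib_j)=(\cap(a_i))(\cap(b_j))$, and invoking the characterization of pre-Schreier domains from Theorem 1.6 of the cited paper. The only cosmetic difference is that the paper cites Corollary 1.7 of that reference for "pre-Schreier implies $\ast$-domain," which does not affect the argument.
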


\begin{proof}
Let $D$ be a pre-Schreier domain. That $D$ is a $\ast $-domain follows from
(1) of \cite[Corollary 1.7]{Zaf pres}. Now let $\{a_{i}\}_{i=1}^{m},\{b_{j}%
\}_{j=1}^{n}$ be a pair of sets of nonzero elements of $D$ such that $%
a_{i}b_{j}|x.$ Then $x=rs$ where $a_{i}|r$ and $b_{j}|s.$ Now as $%
a_{i}b_{j}|x$ if and only if $x\in \cap (a_{i}b_{j})$ and $a_{i}|r$ and $%
b_{j}|s$ if and only if $r\in \cap (a_{i})$ and $s\in \cap (b_{j}).$ Thus by
the pre-Schreier property $x\in \cap (a_{i}b_{j}$ implies that $x=rs$ where $%
r\in \cap (a_{i})$ and $s\in \cap (b_{j}).$ But as we already have
established that $D$ has the $\ast $-property, $\cap (a_{i}b_{j})=(\cap
(a_{i}))(\cap (b_{j})).$ Thus $x\in (\cap (a_{i}))(\cap (b_{j}))$ implies
that $x=rs$ where $r\in \cap (a_{i})$ and $s\in \cap (b_{j})$ and $(\cap
(a_{i})),(\cap (b_{j}))$ is a condensed pair. For the converse suppose that $%
D$ is a $\ast $-domain and for $\{a_{i}\}_{i=1}^{m},\{b_{j}\}_{j=1}^{n}%
\subseteq D,$ $(\cap (a_{i})),(\cap (b_{j}))$ is a condensed pair. Because $%
(\cap (a_{i})),(\cap (b_{j}))$ is a condensed pair, $x\in (\cap
(a_{i}))(\cap (b_{j}))$ implies that $x=rs$ where $r\in (\cap (a_{i}))$ and $%
s\in (\cap (b_{j})).$ But since $D$ has the $\ast $-property, $(\cap
(a_{i}))(\cap (b_{j}))=\cap (a_{i}b_{j}).$ Thus $x\in \cap (a_{i}b_{j})$
implies that $x=rs$ where $r\in (\cap (a_{i}))$ and $s\in (\cap (b_{j})).$
which translates to $a_{i}b_{j}|x$ implies $x=rs$ where $a_{i}|r$ and $%
b_{j}|s$ and according to Theorem 1.6 of \cite{Zaf pres} this is the
characterizing property of pre-Schreier domains.
\end{proof}

The above Proposition can be used to prove the following result.

\begin{proposition}
\label{Proposition A7} If $D$ is condensed and a $\ast $-domain, then $D$ is
a pre-Schreier domain.
\end{proposition}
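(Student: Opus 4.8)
The plan is to deduce this immediately from Proposition \ref{Proposition A6}. That proposition characterizes pre-Schreier domains as exactly those domains $D$ which are $\ast$-domains and which have the additional feature that for every pair $\{a_i\}_{i=1}^m,\{b_j\}_{j=1}^n$ of finite sets of nonzero elements, the ideals $\cap(a_i)$ and $\cap(b_j)$ form a condensed pair. Since we are already handed the hypothesis that $D$ is a $\ast$-domain, the only thing left to verify is the condensed-pair condition for these particular intersection ideals.

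First I would note that $\cap_{i=1}^m (a_i)$ is a \emph{nonzero} ideal of $D$, because it contains the nonzero element $a_1a_2\cdots a_m$ (here we use that $D$ is a domain); likewise $\cap_{j=1}^n (b_j)\in\mathcal{I}(D)$. Now invoke the hypothesis that $D$ is condensed: by definition this means that for \emph{every} pair $I,J\in\mathcal{I}(D)$ the product $IJ$ is condensed, i.e. $IJ=\{ij\mid i\in I,\ j\in J\}$. Applying this with $I=\cap(a_i)$ and $J=\cap(b_j)$ shows precisely that $\cap(a_i)$ and $\cap(b_j)$ are a condensed pair.

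Thus $D$ is a $\ast$-domain for which every such pair of intersection ideals is a condensed pair, and Proposition \ref{Proposition A6} now yields that $D$ is pre-Schreier, completing the argument.

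There is essentially no hard step here: the whole content is packaged in Proposition \ref{Proposition A6}, and the only point requiring a word of care is the trivial observation that a finite intersection of nonzero principal ideals in a domain is again a nonzero ideal, so that the defining property of a condensed domain is applicable to it.
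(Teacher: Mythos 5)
Your proposal is correct and follows essentially the same route as the paper: the paper likewise observes that condensedness makes every pair of nonzero ideals, in particular $\cap(a_i)$ and $\cap(b_j)$, a condensed pair, and then applies Proposition \ref{Proposition A6} together with the $\ast$-hypothesis. Your extra remark that $\cap(a_i)$ is nonzero (since it contains $a_1\cdots a_m$) is a small but welcome point of care that the paper leaves implicit.
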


\begin{proof}
If $D$ is condensed, then every pair of nonzero ideals of $D$ is condensed
and so is $(\cap (a_{i})),(\cap (b_{j})),$ for any pair $\{a_{i}%
\}_{i=1}^{m},\{b_{j}\}_{j=1}^{n}$ of sets of nonzero elements of $D.$ But
then, being a $\ast $-domain makes $D$ a pre- Schreier domain.
\end{proof}

Now these simple observations have the following somewhat interesting
implications.

\begin{corollary}
\label{Corollary A8} An atomic condensed domain $D$ is a PID if and only if $%
D$ has the $\ast $-property. Consequently a non-integrally closed atomic
condensed domain $D$ does not have the $\ast $-property.
\end{corollary}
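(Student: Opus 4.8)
The plan is to prove both directions by chaining the results already assembled in this section. First assume $D$ is an atomic condensed domain with the $\ast$-property. By Proposition~\ref{Proposition A7}, $D$ is then pre-Schreier, so every atom of $D$ is prime (as noted in the discussion preceding Proposition~\ref{Proposition A6}). Since $D$ is atomic, every nonzero nonunit is a product of atoms, hence a product of primes; that is, $D$ is a UFD. But a UFD that is condensed must be a PID: in a UFD the ideal $(a,b)$ for two nonassociate atoms $a,b$ has $aD\cap bD = abD$, i.e.\ $(a,b)_v=D$, and Lemma~\ref{Lemma A1}(a) then forces $(a,b)=D$; more efficiently, in a condensed UFD each prime element generates a maximal ideal by Corollary~\ref{Corollary A5}, so every nonzero prime ideal is maximal, Krull dimension is $\le 1$, and a one-dimensional UFD is a PID. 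I would present whichever of these two routes reads most cleanly, probably the Corollary~\ref{Corollary A5} one.

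For the converse, if $D$ is a PID then $D$ is certainly condensed (indeed Bézout domains are condensed, and in a PID every pair of ideals is principal so $(a)(b)=(ab)=\{xy : x\in(a),\,y\in(b)\}$), and a PID is a UFD hence pre-Schreier, and a pre-Schreier domain has the $\ast$-property by Theorem~1.6 of \cite{Zaf pres} (or directly by \cite[Corollary~1.7]{Zaf pres}, invoked already in the proof of Proposition~\ref{Proposition A6}). So the $\ast$-property holds. This gives the stated equivalence.

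For the "consequently" clause: suppose $D$ is a non-integrally-closed atomic condensed domain. If $D$ had the $\ast$-property, then by the equivalence just proved $D$ would be a PID, hence integrally closed, contradicting the hypothesis. Therefore $D$ does not have the $\ast$-property.

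The only point needing a little care is the step "condensed UFD $\Rightarrow$ PID." The cleanest argument I expect to use is: in a UFD a nonzero prime ideal contains a prime element $p$, and by Corollary~\ref{Corollary A5} $(p)$ is maximal, so the prime ideal equals $(p)$ and is maximal; thus $\dim D\le 1$, and a Noetherian-or-not one-dimensional UFD is a PID (every height-one prime is principal in a UFD, and there are no others). I do not anticipate a genuine obstacle here—the work was done in establishing Lemma~\ref{Lemma A1} and Corollary~\ref{Corollary A5}—but I would double-check that I am not implicitly assuming Noetherianness, since atomic domains need not be Noetherian; the UFD-to-PID implication via "every nonzero prime is principal" holds without any chain condition, so this is fine.
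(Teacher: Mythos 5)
Your argument is correct, but at the decisive step it diverges from the paper. Both proofs begin identically: Proposition \ref{Proposition A7} gives pre-Schreier, atoms are then prime, and atomicity gives a UFD. From there the paper finishes by observing that a UFD is integrally closed and quoting \cite{AAD} (an integrally closed condensed domain is Bezout), so $D$ is an atomic Bezout domain, hence a PID. You instead invoke Corollary \ref{Corollary A5}: in a condensed domain every prime element generates a maximal ideal, so in the UFD $D$ every nonzero prime ideal contains a prime element and therefore equals the (maximal, principal) ideal it generates; thus $\dim D\leq 1$ and every nonzero prime is principal, whence $D$ is a PID, with no Noetherian hypothesis needed. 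The two routes are close relatives --- Corollary \ref{Corollary A5} itself descends from the same \cite{AAD} result via Corollary \ref{Corollary A4} --- so yours is not more elementary at bottom, but it stays entirely within the statements already recorded in Section \ref{S1} and makes the one-dimensional structure of a condensed UFD explicit, whereas the paper's version is shorter and cites the Bezout result directly. One caution: your first suggested alternative, deducing only that non-associate atoms are co-maximal via Lemma \ref{Lemma A1}(a), would not by itself yield a PID; you rightly set it aside in favor of the Corollary \ref{Corollary A5} argument. Your treatment of the converse (PID $\Rightarrow$ pre-Schreier $\Rightarrow$ $\ast$-domain) and of the ``consequently'' clause matches the paper's, the latter being the obvious contrapositive since a PID is integrally closed.
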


\begin{proof}
Let $D$ be atomic and condensed and suppose that $D$ has the $\ast $%
-property. Then, by Proposition \ref{Proposition A7}, $D$ is pre-Schreier.
But every atom is a prime in a pre-Schreier domain. So, being an atomic
domain, $D$ is a UFD. But then $D$ is integrally closed and an integrally
closed condensed domain is Bezout, \cite{AAD}. Whence $D$ is a PID. Of
course a PID has the $\ast $-property and is condensed.
\end{proof}

\begin{example}
\label{Example A9} Let $K$ be a field, let $X$ be an indeterminate over $K$
and let $D=K[[X^{2},X^{3}]].$ Then $D$ does not satisfy $\ast $. The reasons
are (a) $D$ is Noetherian, (b) according to \cite{AD} $D$ is condensed and
(c) $D$ is not integrally closed.
\end{example}

Now recall the "number crunching" I had to do in Example 2.8 of \cite{Zaf
pres} to establish that $K[[X^{2},X^{3}]]$ was not a $\ast $-domain. (Of
course the above approach offers a simpler and direct route compared to the
alternate suggested in \cite{Zaf pres}.) There may arise a question here: Is
a pre-Schreier domain condensed? The answer is: generally, it is not the
case. For example if $D$ is a Schreier domain then it is well known that $%
D[X]$ is Schreier (cf. \cite{C Bez}) and Schreier is integrally closed
pre-Schreier. Now if $D$ is not a field then, as we have noted above (see
Proposition \ref{Proposition B} below as well), $D[X]$ can never be a
condensed domain.

Usually, $D$ having the $\ast $-property does not mean that $D$ is
integrally closed and this is established by the existence of a pre-Schreier
domain that is not Schreier \cite{Zaf pres}, yet there are situations where
the presence of the $\ast $-property in $D$ ensures that $D$ is ("more than"
integrally closed. Call an integral domain $D$ $v$-coherent if for each
nonzero finitely generated ideal $I$ of $D$ we have $I^{-1}$ a $v$-ideal of
finite type. Also call $D$ a generalized GCD (GGCD) domain if for each pair
of nonzero elements of $D$ we have $aD\cap bD$ invertible. It is well known
that a GGCD domain is a locally GCD domain, i.e. $D_{M}$ is a GCD domain for
each maximal ideal $M$, and hence is integrally closed, \cite{AAGGCD}. So a
condensed GGCD domain being Bezout is as given as a a Prufer domain becoming
Bezout being condensed. However the following may well be an improvement on
Corollary 2.6 of \cite{AD}. For this recall that $D$ is a $v$-finite
conductor domain if for every pair of nonzero elements $a,b$ of $D$ the
ideal $aD\cap bD$ is a $v$-ideal of finite type.

\begin{corollary}
\label{Corollary A10} Let $D$ be a condensed domain that is also a $v$%
-finite conductor domain. Then $D$ is a Bezout domain if and only if $D$ is
a $\ast $-domain.
\end{corollary}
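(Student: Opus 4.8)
The plan is to treat the two implications separately. For the forward implication (Bezout $\Rightarrow$ $\ast$-domain) I would argue purely formally: a Bezout domain is a GCD domain, hence a Schreier and in particular a pre-Schreier domain, and a pre-Schreier domain is a $\ast$-domain by \cite[Corollary 1.7]{Zaf pres} --- the very fact already used in the proof of Proposition \ref{Proposition A6}. Here the hypothesis that $D$ be a $v$-finite conductor domain is not needed.

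For the converse, assume $D$ is condensed, a $v$-finite conductor domain, and a $\ast$-domain. By Proposition \ref{Proposition A7}, $D$ is pre-Schreier. The crux is to show that $aD\cap bD$ is principal for every pair of nonzero $a,b\in D$. I would use that the group of divisibility of a pre-Schreier domain is a Riesz group \cite{C Bez}; in plain divisibility terms this gives the interpolation statement that, if $m$ and $m'$ are common multiples of $a$ and $b$, then there is a common multiple $m''$ of $a$ and $b$ with $m''\mid m$ and $m''\mid m'$. Iterating, any finite list $m_1,\dots,m_n$ of common multiples of $a$ and $b$ is divisible by a single common multiple $m^{\ast}$ of $a$ and $b$. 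Since $D$ is a $v$-finite conductor domain, $aD\cap bD=(m_1,\dots,m_n)_v$ for finitely many $m_i\in aD\cap bD$, and we may take each $m_i$ nonzero, hence a common multiple of $a$ and $b$ (note $ab\in aD\cap bD$, so this ideal is nonzero); choosing $m^{\ast}$ as above yields $(m_1,\dots,m_n)\subseteq m^{\ast}D$, whence $aD\cap bD=(m_1,\dots,m_n)_v\subseteq m^{\ast}D\subseteq aD\cap bD$, so $aD\cap bD=m^{\ast}D$ is principal.

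Once every pair of nonzero elements of $D$ is known to have a least common multiple, $D$ is a GCD domain; a GCD domain is integrally closed, and an integrally closed condensed domain is Bezout by \cite{AAD}, so $D$ is Bezout, completing the converse. The step I expect to be the main obstacle is the interpolation step for common multiples: if one does not wish to quote the Riesz-group property, it has to be derived directly from the primality of every nonzero element of $D$ --- that any two common multiples of $a$ and $b$ possess a common multiple of $a,b$ dividing both --- which is a short but slightly delicate factorization argument. The rest is routine bookkeeping with the $v$-operation together with Propositions \ref{Proposition A6} and \ref{Proposition A7} and the cited facts.
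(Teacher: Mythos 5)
Your argument is correct and follows essentially the same route as the paper: the forward direction via Bezout $\Rightarrow$ GCD $\Rightarrow$ (pre-)Schreier $\Rightarrow$ $\ast$-domain, and the converse via Proposition \ref{Proposition A7} to get pre-Schreier, then GCD, hence integrally closed, hence Bezout by \cite{AAD}. The only real difference is that where the paper simply cites \cite[Theorem 3.6]{Zaf pres} for the step that a pre-Schreier $v$-finite conductor domain is a GCD domain, you re-derive it from the Riesz interpolation property of the group of divisibility of a pre-Schreier domain; that derivation is sound (in particular the finitely many generators of the $v$-finite type ideal may indeed be taken inside $aD\cap bD$, since a finitely generated $B$ with $B_{v}=aD\cap bD$ satisfies $B\subseteq B_{v}$), so nothing is missing.
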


That a Bezout domain is a $\ast $-domain follows from the fact that every
GCD domain is Schreier \cite{C Bez} and hence a $\ast $-domain \cite[Theorem
3.6]{Zaf pres}. For the converse note that by Proposition \ref{Proposition
A7} a condensed $\ast $-domain is pre-Schreier and a pre-Schreier $v$-finite
conductor domain is a GCD domain \cite[Theorem 3.6]{Zaf pres} and a GCD
domain is integrally closed.

It may be noted, however, that a condensed $v$-finite conductor domain, even
a condensed Noetherian domain may not be Bezout, as the example of $%
K[[X^{2},X^{3}]]$ indicates. If you go chasing the facts they will take you
further a field, with negative results as it were. Here's a slightly
advanced form of Noetherian domains, recently introduced by this author in 
\cite{Zaf g-ded}. Call $D$ a \emph{dually compact domain} (\emph{DCD}) if
for each set $\{a_{\alpha }\}_{\alpha \in I}$ $\subseteq K\backslash \{0\}$
with $\cap a_{\alpha }D\neq (0)$ there is a finite set of elements $%
\{x_{1},...,x_{r}\}\subseteq K\backslash \{0\}$ such that $\cap a_{\alpha
}D=\cap _{i=1}^{r}x_{i}D$, or equivalently for each $I\in F(D)$, the ideal $%
I_{v}=(I^{-1})^{-1}$ is a finite intersection of principal fractional ideals
of $D$. Indeed a DCD can be condensed without being Bezout. The reason is
that a DC domain will become $v$-G-Dedekind, only if it is a $\ast $-domain,
as shown in Theorem 3.3 of \cite{Zaf g-ded}. Here a domain $D$ is a $v$%
-G-Dedekind domain if $I_{v}$ is invertible for each $I\in F(D).$ But as
soon as you add the condensed property, you get a Bezout domain, because a $%
v $-G-Dedekind domain is integrally closed. On the other hand make a DC
domain as condensed as you want, it won't become Bezout unless it is a $\ast 
$-domain.

\section{New condensed domains from old \label{S2}}

The following is a known result (see e.g. \cite{AD}), but our proof may be
very simple.

\begin{proposition}
\label{Proposition B}Let $D$ be an integral domain and $X$ an indeterminate
over $D.$ Then $D[X]$ is condensed if and only if $D$ is a field.
Consequently if $D$ is a domain such that $D$ is not a field, then $D[X]$ is
never condensed.
\end{proposition}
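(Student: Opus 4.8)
The plan is to handle the two implications separately, reducing the converse to Corollary~\ref{Corollary A5}. For the direction "$D$ a field $\Rightarrow$ $D[X]$ condensed": since $D$ is a field, $D[X]$ is a PID, so every ideal of $D[X]$ is principal. For principal ideals $(f),(g)$ one has $(f)(g)=(fg)$, while the set $\{uv\mid u\in(f),\ v\in(g)\}$ contains $(fh)g=fgh$ for every $h\in D[X]$ and is contained in $(fg)$; hence $\{uv\mid u\in(f),\ v\in(g)\}=(fg)=(f)(g)$, so every pair of ideals of $D[X]$ is a condensed pair and $D[X]$ is condensed. (Equivalently, one may just invoke that a Bezout domain is condensed.)

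For the converse, suppose $D[X]$ is condensed. The key observation is that $X$ is a prime element of $D[X]$: indeed $D[X]/(X)\cong D$ is an integral domain because $D$ is. Now apply Corollary~\ref{Corollary A5} to the condensed domain $D[X]$: every prime element of a condensed domain generates a maximal ideal, so $(X)$ is a maximal ideal of $D[X]$. Therefore $D\cong D[X]/(X)$ is a field. The "consequently" clause is immediate, being the contrapositive of this implication.

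I expect essentially no obstacle here; the only substantive point is recognizing $X$ as a prime of $D[X]$ and then quoting the already-established fact that primes in condensed domains are maximal. If one preferred to avoid Corollary~\ref{Corollary A5}, the alternative route is: pick a nonzero nonunit $a\in D$ and check that $(a,X)^{-1}=D[X]$ computed in $D[X]$ --- if $af\in D[X]$ write $f=h/a$ with $h\in D[X]$, and then $Xf\in D[X]$ forces $a$ to divide every coefficient of $h$, so $f\in D[X]$ --- whence $(a,X)_{v}=D[X]$; Lemma~\ref{Lemma A1}(a) then yields $(a,X)=D[X]$, contradicting $D[X]/(a,X)\cong D/aD\neq 0$. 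The coefficientwise-divisibility bookkeeping in this alternative is the only place a reader might slow down, which is why I would keep the "$X$ is prime" argument as the main proof.
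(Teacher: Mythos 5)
Your proof is correct and matches the paper's own argument: the paper's primary line uses Lemma~\ref{Lemma A}(2) (the atom $X$ lies in a unique maximal ideal of $D[X]$), and it records exactly your main route --- $X$ is prime, so by Corollary~\ref{Corollary A5} it generates a maximal ideal, forcing $D\cong D[X]/(X)$ to be a field --- as a parenthetical alternative, with the converse likewise via $D[X]$ being a PID. No gaps; your fallback argument via $(a,X)_{v}=D[X]$ and Lemma~\ref{Lemma A1}(a) is also sound but unnecessary.
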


\begin{proof}
Certainly $X$ is irreducible and hence, by (2) of Lemma A, must belong to a
unique maximal ideal of $D[X].$ But that is possible only if $D$ is a field.
(Alternatively note that $X$ is a prime in $D[X]$ and if $D[X]$ is
condensed, then $X$ must generate a maximal ideal which is possible only if $%
D$ is a field.) Conversely if $D$ is a field, then $D[X]$ is PID and hence,
obviously, a condensed domain. The consequently part is obvious.
\end{proof}

\begin{proposition}
\label{Proposition C} Let $A\subseteq B$ be an extension of domains such
that $(A:B)\neq (0).$ If $A$ is condensed, then so is $B.$
\end{proposition}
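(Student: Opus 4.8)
The plan is to reduce condensedness of $B$ to condensedness of $A$ by multiplying ideals of $B$ into the conductor. First I would fix a nonzero element $c\in (A:B)$. Note that since $1\in B$ we automatically have $(A:B)\subseteq A$, so $c$ is a genuine nonzero element of $A$ satisfying $cB\subseteq A$; in particular $c^{2}\neq 0$ in the domain $B$.

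Next, given an arbitrary pair of nonzero ideals $I,J$ of $B$, I would form the subsets $cI,cJ\subseteq K$. The key observations are that $cI,cJ\subseteq cB\subseteq A$ and that each of $cI,cJ$ is in fact a (nonzero) ideal of $A$: it is closed under addition because $I$ (resp. $J$) is, and for $a\in A\subseteq B$ and $i\in I$ we get $a(ci)=c(ai)\in cI$ since $I$ is an ideal of $B$. Then I would apply the hypothesis that $A$ is condensed to the pair $cI,cJ$, obtaining $(cI)(cJ)=\{xy\mid x\in cI,\ y\in cJ\}$. Since $(cI)(cJ)=c^{2}(IJ)$ as subsets of $K$, any $z\in IJ$ gives $c^{2}z\in c^{2}(IJ)=(cI)(cJ)$, hence $c^{2}z=(ci')(cj')=c^{2}(i'j')$ for some $i'\in I$, $j'\in J$; cancelling $c^{2}$ in the domain $B$ yields $z=i'j'$. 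Thus $IJ=\{ij\mid i\in I,\ j\in J\}$, and since $I,J$ were arbitrary nonzero ideals, $B$ is condensed.

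The only point requiring genuine care — the "main obstacle", such as it is — is the verification that $cI$ and $cJ$ are honest ideals of $A$ rather than merely $A$-submodules of $K$ lying inside $B$; this is precisely what licenses the use of condensedness of $A$, and it rests on $A\subseteq B$ together with $I,J$ being ideals of $B$ and $c\in(A:B)$. An alternative route is to observe that $0\neq c\in(A:B)$ forces $B\subseteq K$ (the quotient field of $A$), since $b=cb/c$ with $cb\in A$, so that $B$ is an overring of $A$, and then to invoke the theorem of Anderson and Dobbs (\cite{AD}) that every overring of a condensed domain is condensed. The direct argument above, however, is essentially the content of that step and is equally short, so I would present it in place of the citation.
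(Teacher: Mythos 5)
Your proof is correct and is essentially the paper's own argument: the paper fixes $\alpha,\beta$ in the conductor, passes from $I,J$ to the $A$-ideals $\alpha I,\beta J$, applies condensedness of $A$, and divides back, which is exactly your computation with $\alpha=\beta=c$ and cancellation of $c^{2}$. Your explicit check that $cI,cJ$ are ideals of $A$, and the remark that $B$ is an overring so the Anderson--Dobbs result would also apply, are fine additions but do not change the route.
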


\begin{proof}
Let $I,J\in \mathcal{I}(B).$ Then for some $\alpha ,\beta \in \lbrack A:B]$
we have $\alpha \beta IJ=(\alpha I)(\beta J),$ where $(\alpha I),(\beta
J)\in \mathcal{I}(A).$ So for $x\in IJ,$ we have $\alpha \beta x\in \alpha
\beta IJ=(\alpha I)(\beta J),$ forcing $\alpha \beta x=rs$ where $r\in
\alpha I$ and $s\in \beta J,$ because $A$ is condensed. This gives $r/\alpha
\in I$ and $s/\beta \in J.$ But as $\alpha \beta x=rs,$ we have $x=(r/\alpha
)(s/\beta ).$
\end{proof}

\begin{proposition}
\label{Proposition D} Let $A\subseteq B$ be an extension of domains. If $%
A+XB[X]$ is a condensed domain, then $B$ is a field and $A$ is a condensed
domain.
\end{proposition}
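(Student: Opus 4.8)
The plan is to exploit the ring $R = A + XB[X]$ together with the structural results already available: $R$ is condensed by hypothesis, so by Proposition~\ref{Proposition B}-style reasoning every atom of $R$ lies in a unique maximal ideal (Lemma~\ref{Lemma A}(2)), and by Lemma~\ref{Lemma A1} every $t$-invertible ideal of $R$ is principal. First I would show that $B$ is a field. The natural candidate for an atom is $X$ itself: in $R = A + XB[X]$ the element $X$ is irreducible (any factorization $X = fg$ with $f,g \in R$ forces, by comparing $X$-adic orders, one factor to have zero constant term in $B$ and the other to be a unit of $A$, hence of $R$). So by Lemma~\ref{Lemma A}(2), $X$ belongs to a unique maximal ideal of $R$. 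But if $B$ is not a field, pick $b \in B$ a nonzero nonunit of $B$; then I claim the ideals $(X, bX)$-type data, or more directly $(X)$ together with $XB[X]$, exhibit $X$ inside more than one maximal ideal — because $R/(X) \cong A$ and $R/XB[X]$ behaves differently. More cleanly: $XB[X]$ is a prime ideal of $R$ (the quotient is $A$, a domain), it properly contains $XR = X(A + XB[X])$, and $XB[X] \neq R$, so $XB[X]$ sits inside some maximal ideal $M_1$; meanwhile, since $b^{-1} \notin A$, the element $X$ and a suitable element of $R$ are comaximal in a way that produces a second maximal ideal containing $X$, contradicting uniqueness. I expect this is where the argument needs the most care: pinning down that "second maximal ideal" concretely.

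A cleaner route to $B$ being a field avoids that delicacy: $X$ is in fact a \emph{prime} element of $R$, since $R/XR$... wait, $R/XR$ need not be a domain. Instead use that $XB[X]$ is the set of non-units-times-$X$ and note $R \setminus XB[X] \supseteq A \setminus \{0\}$ consists of non-zero-divisors; the localization $R_{XB[X]}$ is a valuation-like object. I would instead argue: by Corollary~\ref{Corollary A5}, in a condensed domain every prime element generates a maximal ideal, and $X$ generates a height-one prime of $R$ (height one because $R/XR$ maps onto $A$ and the minimal primes over $XR$ correspond to... ) — so $XR$ (or its radical) being maximal forces $\dim R = 1$ and, combined with $R/XB[X] \cong A$ needing to be a field-or-small, pushes $B$ to be a field. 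The honest statement is: I will show that if $B$ is not a field, then $X$ lies in two distinct maximal ideals of $R$ — one arising from a maximal ideal of $A$ pulled back, and one arising from a non-unit of $B$ — contradicting Lemma~\ref{Lemma A}(2). This dichotomy is the crux.

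Once $B$ is a field, $R = A + XB[X]$ with $B$ a field containing the quotient field $K$ of $A$ (after replacing $B$ by its field of fractions is unnecessary since $B$ is already a field and $A \subseteq B$ gives $K \subseteq B$). Then I would show $A$ is condensed by a descent argument: given nonzero ideals $I, J$ of $A$ and $x \in IJ$, lift to ideals $\widetilde I = I + XB[X]$ and $\widetilde J = J + XB[X]$ of $R$ (these are ideals of $R$ because $B[X]$ is an $R$-module and $I \cdot XB[X] \subseteq XB[X]$). Compute $\widetilde I \widetilde J = IJ + XB[X]$, so $x \in \widetilde I \widetilde J$; condensedness of $R$ gives $x = fg$ with $f \in \widetilde I$, $g \in \widetilde J$. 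Reducing modulo $XB[X]$ (i.e. looking at constant terms, which lands in $A$), $x = f(0)g(0)$ with $f(0) \in I$ and $g(0) \in J$ — which is exactly condensedness of the pair $I, J$. The main obstacle in this last step is making sure the lifted ideals multiply as claimed and that "constant term" is a ring homomorphism $R \to A$ — but $R/XB[X] \cong A$ via exactly this map, so this is routine. Hence the bulk of the difficulty, as I see it, is the first part: extracting a contradiction from $B$ not being a field, and the cleanest lever for that is the uniqueness-of-maximal-ideal property of the atom $X$ in the condensed domain $R$.
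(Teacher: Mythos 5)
There is a genuine gap, and it sits exactly where you predicted: the proof that $B$ is a field. Your plan hinges on $X$ being an atom of $R=A+XB[X]$, but the irreducibility claim is false in general. In a factorization $X=fg$, degree comparison gives $f=a\in A$ constant and $g=a^{-1}X$; for this to be a genuine factorization one only needs $a^{-1}\in B$, not $a$ a unit of $A$. Thus $X$ is reducible whenever some nonunit $a$ of $A$ becomes invertible in $B$: for $A=\mathbb{Z}$, $B=\mathbb{Z}[1/2]$ one has $X=2\cdot(X/2)$ with neither factor a unit of $R$, and here $B$ is not a field, so the failure occurs precisely in the situation you want to contradict. (Likewise $X$ is not prime in general: in $\mathbb{Z}+X\mathbb{Z}[i][X]$, $X$ divides $(iX)^{2}=-X^{2}$ but not $iX$.) Even granting irreducibility, you never actually produce the two distinct maximal ideals containing $X$; you state explicitly that this "dichotomy is the crux" and leave it as a plan, so the first half is not a proof. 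The paper avoids atoms altogether: since $(A+XB[X]:B[X])=XB[X]\neq(0)$, Proposition \ref{Proposition C} (condensedness passes up along an extension with nonzero conductor) shows $B[X]$ is condensed, and then Proposition \ref{Proposition B} forces $B$ to be a field. That conductor argument is the missing lever; if you want to salvage an atom-based argument you would first have to identify an actual atom of $R$, which is not routine.

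Your second half --- lifting $I,J$ to $I+XB[X]$, $J+XB[X]$, multiplying to get $IJ+XB[X]$, factoring $a=f_{1}f_{2}$ by condensedness of $R$, and reading off constant terms via $R/XB[X]\cong A$ --- is correct and is essentially the paper's own argument for that part, so once the field claim is repaired the rest stands.
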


\begin{proof}
Since $(A+XB[X]:B[X])=XB[X],$ we conclude from Proposition \ref{Proposition
C} that $B[X]$ is condensed. But by Proposition \ref{Proposition B}, $B$
must be a field. Next, let $I,J\in \mathcal{I}(A)$ and let $a\in IJ.$ Since $%
B$ is a field $I(A+XB[X])=I+XB[X]$, $J(A+XB[X])=J+XB[X]$ and $%
(I+XB[X])(J+XB[X])=IJ+XB[X].$ Now $a\in IJ\backslash \{0\}$ means $a\in $ $%
IJ+XB[X]=(I+XB[X])(J+XB[X]).$ This means $a=f_{1}f_{2}$ where $f_{1}\in
(I+XB[X])$ and $f_{2}\in (J+XB[X]),$ because $A+XB[X]$ is a condensed
domain. Now $f_{1}=r+Xg_{1}(X)$ and $f_{2}=s+g_{2}(X)$ where $r\in I$ and $%
s\in J.$ Thus $a=(r+Xg_{1}(X))($ $%
s+Xg_{2}(X))=(rs+X(rg_{2}(X)+sg_{1}(X))+X^{2}g_{1}(X)g_{2}(X)).$ Comparing
coefficients, $a=rs$ where $r\in I$ and $s\in J.$
\end{proof}

Note that $(A+XB[[X]]:B[[X]])=XB[[X]]$ and ideals of $A+XB[[X]]$ are of the
form $I+XB[[X]]$ where $I$ is an ideal of $D$ or of the form $X^{r}JXL[[X]]$
where $J$ is a $D$-submodule of $L$ (see e.g. Proposition 2.6 of \cite{ADu}%
). With reference to Corollary \ref{Corollary A5} we have the following
Corollary.

\begin{corollary}
\label{Corollary D1} Let $A\subseteq B$ be an extension of domains. If $%
A+XB[[X]]$ is a condensed domain, then $B$ is a field and $A$ is a condensed
domain.
\end{corollary}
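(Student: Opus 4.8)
The plan is to mirror the proof of Proposition~\ref{Proposition D} almost verbatim, replacing the polynomial ring $B[X]$ by the power series ring $B[[X]]$ throughout. The two structural facts that made the polynomial argument work have power-series analogues already recorded in the excerpt: first, the conductor $(A+XB[[X]]:B[[X]])=XB[[X]]$ is nonzero, so Proposition~\ref{Proposition C} applies; second, the role played by Proposition~\ref{Proposition B} (``$B[X]$ condensed $\Rightarrow B$ a field'') is now played by Corollary~\ref{Corollary A5} (``$B[[X]]$ condensed $\Rightarrow B$ a field''), since $B[[X]]$ condensed forces $B$ to be a field by that corollary.

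Concretely, I would first invoke Proposition~\ref{Proposition C} with the conductor $XB[[X]]\neq(0)$ to conclude that $B[[X]]$ is condensed, and then apply Corollary~\ref{Corollary A5} to deduce that $B$ is a field. Once $B$ is a field, for $I,J\in\mathcal I(A)$ one has $I(A+XB[[X]])=I+XB[[X]]$, $J(A+XB[[X]])=J+XB[[X]]$, and $(I+XB[[X]])(J+XB[[X]])=IJ+XB[[X]]$, exactly as in the polynomial case. Given $a\in IJ\setminus\{0\}$, condensedness of $A+XB[[X]]$ yields $a=f_1f_2$ with $f_1\in I+XB[[X]]$ and $f_2\in J+XB[[X]]$; writing $f_1=r+Xg_1(X)$, $f_2=s+Xg_2(X)$ with $r\in I$, $s\in J$ and comparing constant terms gives $a=rs$ with $r\in I$, $s\in J$. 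Hence $A$ is condensed.

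There is essentially no new obstacle: the only point that deserves a moment's care is the ideal-theoretic bookkeeping $(I+XB[[X]])(J+XB[[X]])=IJ+XB[[X]]$ and the identification $I(A+XB[[X]])=I+XB[[X]]$, which both use that $B$ is a field so that $XB[[X]]\subseteq I(A+XB[[X]])$ (one can write $X = i\cdot(i^{-1}X)$ for any nonzero $i\in I$, with $i^{-1}X\in XB[[X]]\subseteq A+XB[[X]]$). That little verification is identical in form to the one in Proposition~\ref{Proposition D}, so the whole proof is a one-paragraph transcription. I would simply write:

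\begin{proof}
Since $(A+XB[[X]]:B[[X]])=XB[[X]]\neq(0)$, Proposition \ref{Proposition C} shows that $B[[X]]$ is condensed, and then Corollary \ref{Corollary A5} forces $B$ to be a field. Now let $I,J\in\mathcal{I}(A)$ and let $a\in IJ\setminus\{0\}$. Because $B$ is a field, $I(A+XB[[X]])=I+XB[[X]]$, $J(A+XB[[X]])=J+XB[[X]]$ and $(I+XB[[X]])(J+XB[[X]])=IJ+XB[[X]]$. Hence $a\in IJ+XB[[X]]=(I+XB[[X]])(J+XB[[X]])$, so, as $A+XB[[X]]$ is condensed, $a=f_{1}f_{2}$ with $f_{1}\in I+XB[[X]]$ and $f_{2}\in J+XB[[X]]$. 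Writing $f_{1}=r+Xg_{1}(X)$ and $f_{2}=s+Xg_{2}(X)$ with $r\in I$, $s\in J$, we get $a=rs+X(rg_{2}(X)+sg_{1}(X))+X^{2}g_{1}(X)g_{2}(X)$, and comparing constant terms yields $a=rs$ with $r\in I$ and $s\in J$. Thus $A$ is condensed.
\end{proof}
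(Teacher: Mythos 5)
Your proof is correct and follows the route the paper itself indicates: the nonzero conductor $(A+XB[[X]]:B[[X]])=XB[[X]]$ together with Proposition \ref{Proposition C} gives $B[[X]]$ condensed, Corollary \ref{Corollary A5} then forces $B$ to be a field, and the argument for $A$ condensed is the same constant-term comparison as in Proposition \ref{Proposition D}. No issues.
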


For the converse of Proposition \ref{Proposition D} we need to digress a
little and recall Proposition 3 of \cite{Zaf potent}.

\begin{proposition}
\label{Proposition E} Let $D$ be an integral domain and let $L$ be an
extension field of the field of fractions $K$ of $D.$ Then each nonzero
ideal $F$ of $R=D+XL[X]$ is of the form $f(X)JR=f(X)(J+XL[X])$ , where $J$
is a $D$-submodule of $L$ and $f(X)\in R$ such that $f(0)J\subseteq D.$ If $%
F $ is finitely generated, $J$ is a finitely generated $D$-submodule of $L.$
\end{proposition}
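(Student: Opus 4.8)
The plan is to compare $F$ with its extension to $L[X]$, which is a PID. First I would observe that $L[X]=L\cdot R$: indeed $1\in D\subseteq R$ and $X^{k}\in XL[X]\subseteq R$ for every $k\ge 1$, so every monomial lies in $R$ and $L$-scaling recovers all of $L[X]$. Hence for a nonzero ideal $F$ of $R$ the set $FL[X]$ coincides with $LF$ and is an ideal of $L[X]$, so $FL[X]=f(X)L[X]$ for some $f(X)\in L[X]$, determined up to a factor in $L\setminus\{0\}$. Since $L[X]$ is a domain and $F\subseteq FL[X]=f(X)L[X]$, we get $F=f(X)M$ where $M:=\{h\in L[X]\mid f(X)h\in F\}$ is an $R$-submodule of $L[X]$; moreover $f(X)\,ML[X]=(f(X)M)L[X]=FL[X]=f(X)L[X]$ forces $ML[X]=L[X]$.

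The heart of the argument is to show $XL[X]\subseteq M$, i.e. $f(X)\cdot XL[X]\subseteq F$. Writing $f(X)=\sum_{i}g_{i}q_{i}$ with $g_{i}\in F$ and $q_{i}\in L[X]$ (possible since $f(X)\in FL[X]$), for any $p\in L[X]$ we have $f(X)Xp=\sum_{i}g_{i}(q_{i}Xp)$ with each $q_{i}Xp\in XL[X]\subseteq R$, so each term lies in $FR=F$. Given $XL[X]\subseteq M$, evaluation at $0$ yields $M=J+XL[X]$ with $J:=M\cap L$: any $h\in M$ satisfies $h-h(0)\in XL[X]\subseteq M$, whence $h(0)\in M\cap L=J$. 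Here $J$ is an additive subgroup of $L$ stable under $D\subseteq R$, hence a $D$-submodule of $L$, and $J\neq(0)$: from $ML[X]=L[X]$ one gets $1=\sum m_{i}p_{i}$ with $m_{i}\in M$, $p_{i}\in L[X]$, and evaluating at $0$ writes $1$ as an $L$-combination of the elements $m_{i}(0)\in J$. Therefore $F=f(X)(J+XL[X])=f(X)\,JR$.

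It then remains to normalize $f$ and verify the side conditions. From $f(X)j\in F\subseteq R$ for $j\in J$ we read off $f(0)j\in D$, i.e. $f(0)J\subseteq D$. If $f(0)=0$ then $f(X)\in XL[X]\subseteq R$; if $f(0)\neq 0$, replace $f(X)$ by $f(0)^{-1}f(X)$ and $J$ by $f(0)J$ (legitimate as $f(0)\in L\setminus\{0\}$), which leaves $f(X)\,JR$ unchanged, makes $f(0)=1\in D$, and keeps $f(0)J=J\subseteq D$. For the finite-type claim, if $F=(g_{1},\dots,g_{n})R$ then $M=f(X)^{-1}F=(f^{-1}g_{1},\dots,f^{-1}g_{n})R$; writing $f^{-1}g_{i}=c_{i}+Xp_{i}$ with $c_{i}=\bigl(f^{-1}g_{i}\bigr)(0)\in J$, any $j\in J\subseteq M$ has the form $j=\sum_{i}(f^{-1}g_{i})r_{i}$ with $r_{i}\in R$, and evaluating at $0$ gives $j=\sum_{i}c_{i}r_{i}(0)\in\sum_{i}c_{i}D$, so $J=\sum_{i=1}^{n}c_{i}D$ is a finitely generated $D$-submodule of $L$.

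I expect the main obstacle to be precisely the inclusion $XL[X]\subseteq M$ — that $F$ is "saturated" in the $XL[X]$-direction — together with the nonvanishing of $J$; both rest on the fact that the generator $f(X)$ actually lies in $FL[X]=LF$, not merely that it generates this ideal. The rest is routine bookkeeping with the pullback structure of $R=D+XL[X]$.
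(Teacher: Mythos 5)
Your proof is correct. Note that the paper itself contains no argument for Proposition \ref{Proposition E}: it is simply recalled from Proposition 3 of \cite{Zaf potent}, so there is no in-paper proof to compare against. Your route is the standard one for such pullback constructions (compare the treatment of $D+XD_{S}[X]$ in \cite{CMZ}): extend $F$ to the PID $L[X]$, write $FL[X]=f(X)L[X]$, factor $F=f(X)M$, establish the key saturation $f(X)\cdot XL[X]\subseteq F$ from $f\in FL[X]=LF$ together with $XL[X]\subseteq R$, and then read off $J=M\cap L$; the normalization making $f\in R$ with $f(0)J\subseteq D$, the nonvanishing of $J$ via $ML[X]=L[X]$, and the transfer of finite generation to $J$ by evaluating generators at $0$ all check out.
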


Using the tail-end part of the proof of the above proposition, we can
conclude that if $F$ is a two generated ideal of $R$, then $F=f(X)(J+XL[X])$
where $J$ is a two generated $D$-submodule of $L$ and $f(X)\in R.$ The
following special cases apply:

\begin{itemize}
\item (a) If $f(X)=1,$ $J$ is an ideal of $D$ and

\item (b) If $f(X)$ is non constant with $f(0)\neq (0),$ $J$ is still a
fractional ideal of $D$. By replacing $f$ by $\frac{1}{d}f$ we can assume
that $J$ is an ideal of $D$ (as in that case $f(0)=1$). Because $f(0)=1$ we
have $f(X)\in R$ and $J$ is an ideal (since $f(0)J\subseteq D)$ and so the
case (b) reduces to case (a). This leaves the case of

\item (c) for $f(0)=0$. If $f(0)=0,$ then $f(X)=X^{r}g(X)$ where $r>0$ and $%
g(0)=1.$ (We can assume that because if $g(0)=l\in L\backslash \{0\},$ we
can replace the generators $j_{i}$ of $J$ by $j_{i}/l).$ Now suppose that $D$
is condensed and we want to show that $R=D+XL[X]$ is condensed. By Theorem 1
of \cite{AD}, we need to show that the product of any pair $A,B$ of nonzero $%
2$-generated (or finitely generated) ideals of $R$ is condensed. But the
general $D+XL[X]$ case may be hard, as indicated in \cite{ADu}. So, let's
take care of the simpler cases before attacking the harder one(s). The first
of the simpler cases is tackled in the following Lemma.
\end{itemize}

\begin{lemma}
\label{Lemma E1} If $A=X^{r}g(X)L[X]$, where $g(0)=1$ then the pair $A,B$ is
condensed for any ideal $B$ of $R=D+XL[X]$.
\end{lemma}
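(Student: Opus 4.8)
The statement to prove is Lemma E1: if $A = X^r g(X)L[X]$ with $g(0)=1$, then for any ideal $B$ of $R = D + XL[X]$, the pair $A, B$ is condensed, i.e. every element of $AB$ has the form $ab$ with $a\in A$, $b\in B$. The first thing I would notice is that $A = X^r g(X)L[X]$ is a principal ideal of $R$: since $g(0)=1$ we have $g(X)\in R$, and $X^r g(X)\in R$, so $A = X^r g(X)\cdot R$ (here $L[X] = R$ as an $R$-module in the sense that $X L[X]\subseteq R$ and $X^r g(X)L[X] = X^r g(X)(L + XL[X])$, but one should be a little careful: $A = X^r g(X)L[X]$, and multiplying the generator $X^r g(X)$ of a principal ideal of $R$ gives $X^r g(X)R = X^r g(X)(D + XL[X]) = X^r g(X)D + X^{r+1}g(X)L[X]$, which is contained in but perhaps not equal to $X^r g(X)L[X]$). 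So the honest observation is that $A$ is generated as an $R$-ideal by the finitely many elements $X^r g(X)\ell_k$ for a $K$-basis-type spanning set, but more usefully: $A$ is an invertible (indeed, locally principal) ideal, or at least $A$ behaves like a principal ideal up to the fact that $X^r g(X)L[X]$ is $X^r g(X)$ times the whole "polynomial part". The cleanest route is to observe $AB = X^r g(X)L[X]\cdot B$ and that since $g(0)=1$, the factor $g(X)$ is a unit in the relevant localization; the real content is the interaction of $X^r L[X]$ with an arbitrary ideal $B$.

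The core of the argument is a factorization-of-a-single-element computation. Take $h \in AB$. Write $B = f(X)(J + XL[X])$ using Proposition E (with $J$ a $D$-submodule of $L$, $f(X)\in R$, $f(0)J\subseteq D$). Then $AB = X^r g(X)L[X]\cdot f(X)(J+XL[X])$. Since $g(0)=1$ and $f(X)$ has whatever constant term it has, I would split into the cases catalogued just before the lemma: $f(0)=1$ (WLOG after scaling, the "case (a)/(b)" situation where $J$ is an ideal of $D$), and $f(0)=0$, i.e. $f = X^s(\text{unit-constant-term})$. In either case the product $AB$ simplifies: when $f(0)\ne 0$ we get $AB = X^r g(X)f(X)(JL[X] + \cdots)$ and since $J\ne 0$ contains a nonzero element of $L$, $JL[X]$ spans enough that $AB = X^r g(X)f(X)L[X]$ — a principal-type ideal with a single generator $X^r g(X)f(X)$ (times $L[X]$). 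Then any $h\in AB$ is $X^r g(X)f(X)\cdot p(X)$ for some $p\in L[X]$, and I must exhibit $h = a b$ with $a\in X^r g(X)L[X] = A$ and $b\in B = f(X)(J+XL[X])$. The natural split is $a = X^r g(X)\cdot(\text{something in }L[X])$ and $b = f(X)\cdot(\text{something in } J + XL[X])$; the point is to absorb $p(X)$ and a suitable element of $J$ (to meet the $f(0)J\subseteq D$ constraint when $f(0)\ne 0$, and trivially when $f(0)=0$). When $f(0)=0$ the constraint $f(0)J\subseteq D$ is vacuous, so $J$ can be any $D$-submodule of $L$; here $AB = X^{r+s}\cdot(\text{stuff})L[X]$ and the factorization is even easier because everything lives in $XL[X]\subseteq R$ with no constant-term obstruction.

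The key steps in order: (1) reduce $B$ to the normal form $f(X)(J+XL[X])$ of Proposition E, and reduce to $J$ two-generated (only finitely generated ideals matter, by Theorem 1 of [AD], as the paper already invoked); (2) compute $AB$ explicitly, using $g(0)=1$ to see $g(X)$ contributes only a unit factor in each local picture, and identify $AB$ as (a power of $X$)$\cdot$(unit)$\cdot L[X]$ or equal to $f(X)X^r g(X)L[X]$; (3) given $h\in AB$, write $h = X^r g(X)f(X) p(X)$ and produce the explicit factorization $h = \bigl(X^r g(X)\,q(X)\bigr)\bigl(f(X)(\alpha + X\,w(X))\bigr)$ by choosing $\alpha\in J\setminus\{0\}$ and solving $q(X)(\alpha + Xw(X)) = p(X)$ over $L[X]$ after dividing — note $\alpha + Xw(X)$ with $\alpha\ne 0$ is a unit-constant-term element, hence a non-zero-divisor we can divide by in $L[X]$ only if we allow $\alpha^{-1}\in L$, which we may since $L$ is a field; so set $w = 0$, $\alpha\in J\setminus\{0\}$, $q(X) = \alpha^{-1}p(X)\in L[X]$, giving $a = X^r g(X)\alpha^{-1}p(X)\in A$ and $b = f(X)\alpha\in f(X)(J+XL[X]) = B$, and indeed $ab = X^r g(X) f(X) p(X) = h$; (4) check the edge cases — $f(0)\ne 0$ needs $f(0)\alpha \in D$, which holds since $\alpha\in J$ and $f(0)J\subseteq D$ (so pick $\alpha\in J$, not merely in $L$); $f(0)=0$ is immediate; and if $B$ is not of the pure form (e.g. the generator $f$ is non-constant), the same scaling reductions from cases (a)–(c) in the text apply.

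The main obstacle I anticipate is step (3)–(4): making sure the chosen $\alpha$ simultaneously (i) is nonzero so that $\alpha^{-1}$ exists in $L$, (ii) lies in $J$ so that $f(X)\alpha$ genuinely lands in $B = f(X)(J+XL[X])$, and (iii) when $f(0)\ne 0$, satisfies $f(0)\alpha\in D$ automatically via $f(0)J\subseteq D$ — and then verifying that the leftover factor $X^r g(X)\alpha^{-1}p(X)$ really is in $A = X^r g(X)L[X]$ (it is, since $\alpha^{-1}p(X)\in L[X]$), and that the constant-term bookkeeping for membership in $R = D+XL[X]$ is consistent (this is where $g(0)=1$ is used: $X^r g(X)$ has its lowest-degree term $X^r$, so for $r\ge 1$ there's no constant-term obstruction, and the hypothesis $r>0$ from case (c) is exactly what we need; if $r=0$ one would be in case (a)/(b) which the text says reduces away). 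The only genuinely delicate point is confirming that $AB$ is really equal to the single-generator ideal $X^r g(X) f(X) L[X]$ rather than something smaller — this uses $J\ne 0$ (so $JL = L$, as $J$ is a nonzero $D$-submodule of the field $L$ and $JL$ is then a $K$-subspace of $L$ containing a nonzero element, hence... actually one only needs $JL[X]$ to contain a nonzero constant, which a single nonzero $\alpha\in J$ provides), so $AB \supseteq X^r g(X) f(X)\alpha L[X] = X^r g(X) f(X) L[X] \supseteq AB$, giving equality. Once that equality is in hand, the factorization above closes the proof.
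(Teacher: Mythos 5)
Your proof is correct, and it reaches the conclusion by a more uniform route than the paper's. You normalize $B=f(X)(J+XL[X])$ via Proposition \ref{Proposition E}, observe that for any $\alpha\in J\setminus\{0\}$ one has $AB=X^{r}g(X)f(X)L[X]$ (both inclusions, exactly as you indicate), and then factor an arbitrary $h=X^{r}g(X)f(X)p(X)$ as $h=\bigl(X^{r}g(X)\alpha^{-1}p(X)\bigr)\bigl(f(X)\alpha\bigr)$, the first factor lying in $A$ and the second in $B$ precisely because $\alpha\in J$. The paper instead first reduces to $A=XL[X]$ (the multiply-by-an-element-and-cancel observation) and then argues case-by-case on the shape of the second ideal, offering as an alternative the trick of writing $x=\sum Xf_{i}b_{i}$ and scaling by a single $l\in L\setminus\{0\}$ with every $lf_{i}\in R$. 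Your choice of the scalar from inside $J$ is the better move: it makes membership of the cofactor in $B$ automatic, it handles all three of the paper's cases and every $r\geq 1$ at once, and it sidesteps the delicate point in the paper's alternative argument, since a common $l$ with $lf_{i}(0)\in D$ for all $i$ need not exist when the constant terms $f_{i}(0)$ have ratios outside $K$ (the paper's case-by-case version also silently assumes the relevant constant terms lie in $K[X]$, which your argument never needs). Two small remarks: the reduction to two-generated $J$ that you mention is unnecessary, and would even be insufficient as stated since the lemma concerns an arbitrary ideal $B$ --- happily your actual argument never uses finite generation; and the case split on $f(0)$ is likewise superfluous, because the computation $AB=X^{r}g(X)f(X)L[X]$ and the displayed factorization are uniform in $f$.
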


Proof. Indeed if $Aa,$ $B$ is a condensed pair, where $a\in D\backslash 0\},$
then so is $A,B.$ This is because if $x\in (Aa)B$ implies $x=rs$ where $r\in
Aa$ and $s\in B,$ then $y\in AB$ implies $ya\in $ $(Aa)B,$ forcing $ya=r\in
Aa$ and $s\in B$ and thus $y=(r/a)s.$ That $A,B$ being condensed implying $%
Aa,B$ being condensed is direct. Consequently we can take $A=XL[X].$ The
other ideal could be (a) $A_{1}=XL[X]$ or (b) $B=\mathfrak{B}+XL[X],$ where $%
\mathfrak{B}$ is a nonzero ideal of $D$ or (c) $C=X(\mathfrak{C}+XL[X],$
where $\mathfrak{C}$ is a nonzero $D$-submodule of $K.$ In case (a,a) we
have $AA_{1}=X^{2}L[X]$ and $x\in AA_{1}$ implies $x=X^{2}h(X)$ and we can
set $x=(X)(Xh(X)).$ For the case (a,b) we have $AB=(XL[X])(\mathfrak{B}%
+XL[X])=XL[X]$ and $x\in AB$ implies $x=Xh(X)$ where $h(X)\in K[X]$ and we
can find $d\in D\backslash \{0\}$ such that $dh(X)\in R.$ In this case $%
x=(X/d)(dXh(X))$ will do, as $X/d\in XL[X]$ and $dXh(X)\in \mathfrak{B}%
+XL[X].$ Finally, in case (a,c) we have $AC=(XL[X])(X(\mathfrak{C}%
+XL[X])=X^{2}L[X]$ and $x\in AC$ means $x=X^{2}h(X)$ where $h(X)\in L[X].$
We can find $l\in L\backslash \{0\}$ so that $lh(X)\in (\mathfrak{C}+XL[X]$
and set $x=(X/l)(lh(X).$

Alternatively, let $x\in (XL[X])B.$ Then $x=\sum Xf_{i}b_{i}.$ Since $%
f_{i}\in L[X]$ we can find $l\in L\backslash \{0\}$ such that $lf_{i}\in R.$
But then $x=(X/l)(\sum lf_{i}b_{i}).$ Now as $lf_{i}\in R$ we have $\sum
lf_{i}b_{i}\in B.$ But then we have an expression for $x$ in the required
form.

As an application of Lemma \ref{Lemma E1} when considering condensedness of
two nonzero ideals $I,J$ of $R,$ we can avoid the cases where one of the
ideals if of the form $A=X^{r}g(X)L[X].$ The following result can be proved
as a corollary of a latter result, but we prove it separately for the sake
of clarity.

\begin{theorem}
\label{Theorem F}Let $D$ be a domain, $K$ the quotient field of $D$ and let $%
X$ be an indeterminate over $K.$ Then $D$ is condensed if and only if $%
D+XK[X]$ is condensed.
\end{theorem}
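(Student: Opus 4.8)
The plan is to prove the two directions separately. For the "only if" direction, suppose $D+XK[X]$ is condensed. Then since $(D+XK[X]:K[X]) = XK[X] \neq (0)$, Proposition~\ref{Proposition D} (with $A = D$, $B = K$) gives immediately that $D$ is a condensed domain. (This is the cheap direction and essentially already packaged in Proposition~\ref{Proposition D}.)

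For the "if" direction, assume $D$ is condensed; we must show $R = D+XK[X]$ is condensed. By Theorem~1 of \cite{AD}, it suffices to show that every pair $A, B$ of nonzero $2$-generated ideals of $R$ is a condensed pair. By Proposition~\ref{Proposition E} and the remarks following it, each such ideal has the form $f(X)(J + XK[X])$ where $J$ is a $2$-generated $D$-submodule of $K$ and $f(X) \in R$ with $f(0)J \subseteq D$; moreover (cases (a),(b)) after dividing $f$ by a suitable element of $D$ we may assume $f(0) = 1$ and $J$ is an ideal of $D$, unless $f(0) = 0$. In the latter case $f(X) = X^r g(X)$ with $g(0) = 1$, and then $A = f(X)(J+XK[X]) = X^r g(X) K[X]$ is of the form covered by Lemma~\ref{Lemma E1} — hence $A$ pairs condensedly with \emph{any} ideal of $R$, and we are done in that case. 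So by Lemma~\ref{Lemma E1} we are reduced to the situation where both $A$ and $B$ have $f(0) = 1$, i.e.\ $A = g(X)(I + XK[X])$, $B = h(X)(J + XK[X])$ with $g, h \in R$, $g(0) = h(0) = 1$, and $I, J$ nonzero ideals of $D$.

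Now I would compute the product and analyze membership in it. We have
$$AB = g(X)h(X)(I + XK[X])(J + XK[X]) = g(X)h(X)(IJ + XK[X]),$$
using that $K$ is a field so $(I+XK[X])(J+XK[X]) = IJ + XK[X]$. Take $0 \neq \varphi \in AB$, so $\varphi = g(X)h(X)\psi$ with $\psi \in IJ + XK[X]$. Write the constant term: $\varphi(0) = \psi(0) \in IJ$ (possibly $0$). If $\varphi(0) \neq 0$, then since $D$ is condensed and $\varphi(0) \in IJ$, we can write $\varphi(0) = rs$ with $r \in I$, $s \in J$; I then aim to split $\psi = (r + XK[X]\text{-part})(s + XK[X]\text{-part})$ explicitly — the point being that in $K[X]$ one can always factor a polynomial with prescribed nonzero constant term $rs$ as a product of one with constant term $r$ and one with constant term $s$ (e.g.\ take the first factor to be $r + (\text{everything else})/s \cdot X\text{-stuff}$, scaled appropriately; here $K$ being a field is essential), and then absorb $g(X)$ into one factor and $h(X)$ into the other so that the resulting two factors genuinely lie in $A$ and $B$ respectively. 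If instead $\varphi(0) = 0$, then $\varphi \in X R \cdot (\text{stuff})$ and one falls back on the $XK[X]$-type argument as in Lemma~\ref{Lemma E1} (factor off a power of $X$, clear denominators by an element of $K^\times$). The main obstacle — and the step I expect the author's proof to spend its care on — is this bookkeeping: verifying that after factoring the constant terms via condensedness of $D$ and distributing $g$ and $h$, the two factors one writes down actually lie in the ideals $A$ and $B$ (not merely in $R$), i.e.\ that the "$f(0)J \subseteq D$" constraints from Proposition~\ref{Proposition E} are respected. Everything else is the routine polynomial algebra of the $D + XK[X]$ construction.
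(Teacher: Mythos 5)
Your ``only if'' direction is exactly the paper's (Proposition~\ref{Proposition D}), and your overall strategy for the ``if'' direction (reduce to $2$-generated ideals via \cite{AD}, classify them via Proposition~\ref{Proposition E}, split the constant term using condensedness of $D$ and push the polynomial tail, divided by one factor, into $XK[X]$) is the right shape. But there is a genuine gap in your reduction: you claim that when $f(0)=0$, so $f(X)=X^rg(X)$ with $g(0)=1$, the ideal $f(X)(J+XK[X])$ equals $X^rg(X)K[X]$ and is therefore disposed of by Lemma~\ref{Lemma E1}. That identity is false in general: $X^rg(X)(J+XK[X])$ has its lowest-order coefficient constrained to lie in $J$, whereas $X^rg(X)K[X]$ allows any element of $K$ there; these coincide only when $J=K$, which fails whenever $D$ is not a field and $J$ is a finitely generated $D$-submodule of $K$. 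Lemma~\ref{Lemma E1} only covers ideals whose ``coefficient module'' is all of $K$ (resp.\ $L$), not the generic $f(0)=0$ ideal.

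Consequently your argument only treats the pairs in which both ideals have $f(0)=1$ (the paper's case (a,a), after reducing (b) to (a)), and it silently skips the pairs involving an ideal of the form $X(\mathfrak{C}+XK[X])$ with $\mathfrak{C}$ a $2$-generated \emph{fractional} ideal of $D$ --- the paper's cases (a,c) and (c,c). These are precisely the cases that need an extra idea beyond what you wrote: one writes $\mathfrak{C}=I/d$ with $I$ an integral ideal and $d\in D\setminus\{0\}$, applies condensedness of $D$ to the integral product (e.g.\ $I_1I_2$), rescales to split the lowest coefficient $\gamma=\gamma_1\gamma_2$ with $\gamma_i\in\mathfrak{C}_i$, and then takes the factors $X\gamma_1$ and $X(\gamma_2+(X/\gamma_1)h(X))$, the second lying in $X(\mathfrak{C}_2+XK[X])$ because $(X/\gamma_1)h(X)\in XK[X]$. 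Your sketch of the $f(0)=1$ case is also vaguer than needed (the clean choice is to take one factor to be the constant $r$ itself and absorb the entire tail into the other factor as $s+(X/r)h(X)$, rather than ``scaling appropriately''), but that part is fixable; the missing fractional-ideal cases are the substantive defect.
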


For a start let us display below the types of ideals that we may expect in
our study, with reference to Proposition 4.12 of \cite{CMZ}.

\begin{itemize}
\item (a) When $f(X)=1$ we have $A=(\mathfrak{A}+XK[X])$ where $\mathfrak{A}$
is a $2$ generated ideal of $D$ and $\mathfrak{A}\neq (0),$ by Lemma \ref%
{Lemma E1}

\item (b) When $f(X)$ is such that $f(0)=1$ we have $B=f(X)(\mathfrak{B}%
+XK[X]),$ where $\mathfrak{B}$ is a nonzero two generated ideal of $D,%
\mathfrak{B}\neq (0),$ by Lemma \ref{Lemma E1}. Since $f$ belongs to $R,$
case (b) reduces to case (a).

\item (c) When $f(X)=X^{r}g(X)$, with $g(0)=1,$ where $r$ is a positive
integer and $\mathfrak{C}$ is a nonzero $2$-generated fractional ideal of $D$%
. But as $X^{r-1}g\in R$ we get $C=X(\mathfrak{C}+XK[X])$
\end{itemize}

Depending on the types of the $2$-generated ideals we need to study the
following three cases (a,a), (a,c) ( c,c).

\begin{itemize}
\item (aa) $A=(\mathfrak{A}+XK[X]),$ $A_{1}=(\mathfrak{A}_{1}+XK[X]),$ $%
AA_{1}=(\mathfrak{AA}_{1}+XK[X].$ Now $x\in AA_{1}$ implies $x=ij+Xh(X),$
where $i\in $ $\mathfrak{A}$ and $j\in \mathfrak{A}_{1}$ because the product 
$\mathfrak{AA}_{1}$ is condensed. We can write $a=i\in \mathfrak{A}+XK[X]$,
where $i\in \mathfrak{A}$ and $b=(j+(X/i)h(X))\in \mathfrak{A}_{1}+XK[X],$
here $Xh(X)\in R$ and so $(X/i)h(X)\in XK[X].$

\item (ac) $A=(\mathfrak{A}+XK[X]),~C=X(\mathfrak{C}+XK[X]).$ Then $AC=X(%
\mathfrak{AC}+XK[X])$. Let $x\in AC.$ Then $x=X(\gamma +Xh(X))$ where $%
\gamma \in \mathfrak{AC.}$ Since $\mathfrak{A}$ is an ideal and $\mathfrak{%
C=I/d}$ where $\mathfrak{d}$ is a nonzero element of $D,$ $\mathfrak{A,C}$
is a condensed pair and we can write $\gamma =\alpha \beta $ where $\alpha
\in \mathfrak{A}$ and $\beta \in \mathfrak{C=I/d.}$ Set $a=\alpha $ and $%
c=(X(\beta +(X/\alpha )h(X)).$ Since $(X/\alpha )h(X)\in XK[X]$ we have $%
\beta +(X/\alpha )h(X)\in \mathfrak{C}+XK[X]$ and so $X(\beta +(X/\alpha
)h(X))\in X(\mathfrak{C}+XK[X]).$ Thus $x=ac=\alpha (X(\beta +(X/\alpha
)h(X))).$

\item (cc) $C_{1}=X(\mathfrak{C}_{1}+XK[X]),~C_{2}=X(\mathfrak{C}%
_{2}+XL[X]), $ $C_{1}C_{2}=X^{2}(\mathfrak{C}_{1}\mathfrak{C}_{2}+XK[X]).$
Let $x\in C_{1}C_{2}$ and let $\gamma \in \mathfrak{C1C}_{2}\backslash \{0\}.
$ Then $x=X^{2}(\mathfrak{\gamma }+Xh(X)).$ Here too we must find $\gamma
_{1}\in \mathfrak{C}_{1}$ and $\gamma _{2}\in \mathfrak{C}_{2}$ such that $%
\gamma =\gamma _{1}\gamma _{2}.$ But this is easy in this case because by
Proposition 4.12 of \cite{CMZ}, and our assumption that $g(0)=1,$ $\mathfrak{%
C}_{1}$ and $\mathfrak{C}_{2}$ are both fractional ideals of $D.$ So $%
\mathfrak{C}_{i}=\frac{I_{i}}{d_{i}}$ where $I_{i}$ are ideals of $D$ and $%
d_{i}\in D\backslash \{0\}.$ Thus $\gamma =\frac{y}{d_{1}d_{2}}$ and as $D$
is condensed, $y=y_{1}y_{2}$ where $y_{i}\in I_{i}$ and so $\gamma =(\gamma
_{1})(\gamma _{2})$ where $\gamma _{i}=\frac{y_{i}}{d_{i}}\in \mathfrak{C}%
_{i}.$ Set $c_{1}=X\gamma _{1}\in C_{1}$ and set $c_{2}=X(\gamma _{2}+\frac{X%
}{\gamma _{1}}h(X)).$ Now $X\gamma _{2}\in C_{2}$ patently because $\gamma
_{2}\in \mathfrak{C}_{2}$ and $X(\frac{X}{\gamma _{1}}h(X))\in C_{2}$
because $X(\frac{X}{\gamma _{1}}h(X))\in X(XL[X]).$ Since both belong to $X(%
\mathfrak{C}_{2}+XK[X)$ their sum must do the same. Now check that $%
c_{1}c_{2}=$ $c=(X^{r}g_{1}(X)\gamma _{1})($ $X^{s}g_{2}(X)(\gamma _{2}+%
\frac{X}{\gamma _{1}}h(X))$. That $D$ is condensed if $D+XK[X]$ is condensed
follows from Proposition \ref{Proposition D}.
\end{itemize}

Another simple case is that of when $D$ is a field, though here we shall
consider the ring $K+XL[X]$ where $L$ is an extension of $K$. Let us first
write another version of Proposition 3 of \cite{Zaf potent}: Let $K$ be a
field, $L$ an extension field of $K$ and let $X$ be an indeterminate over $%
L. $ Then each nonzero ideal $F$ of $R=K+XL[X]$ is of the form $%
F=f(X)JR=f(X)(J+XL[X])$ ,where $J$ is a $K$-subspace of $L$ and $f(X)\in R$
such that $f(0)J\subseteq K$. If $F$ is finitely generated, $J$ is a
finitely generated $K$-subspace of $L$.

Now in this case $f(X)=1$, as $f\in R,$ gives $F$ as either $F=K+XL[X]=R$
(if $F\cap K\neq (0)$) or $F=XL[X]$ (if $F\cap D=(0)$). (We could have had $%
F=f(X)X^{r}L[X],$ but the considerations like the ones in the proof of Lemma %
\ref{Lemma E1} would have whittled it down to the current form.) Next for $f$
such that $f(0)=1$ we have $F=f(X)R$ (when $J\neq 0$) and $F=f(X)XL[X]$
(when $J=(0)$). In the $f(0)=0$ case we have $F=X^{r}g(X)JR$ where $J$ is a $%
K$-submodule of $L.$

Of these $f(X)R$, being principal, will produce a condensed pair with any
ideal $J$ of $R.$ So will $f(X)XL[X]$.

So, essentially, we have two types of ideals that need to be considered (a) $%
A=X^{r}g(X)XL[X]$ (or $A=XL[X]$ as $X^{r}g(X)\in R$ and so can be
cancelled.) and (b) $B=F=X^{s}g(X)JR$ where $J$ is a $K$-submodule of $L.$

\begin{lemma}
\label{Lemma G} $XL[X]$ , $A$ is a condensed pair for every ideal $A$ of $R.$
\end{lemma}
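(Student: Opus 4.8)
The plan is to read off the answer from the description of the ideals of $R=K+XL[X]$ recalled just above. If $A=(0)$ there is nothing to prove, so assume $A$ is a nonzero ideal. By that description, $A=f(X)\,JR=f(X)(J+XL[X])=\{\,f(X)(j+Xq):j\in J,\ q\in L[X]\,\}$ for some $K$-subspace $J$ of $L$ and some $f\in R$ with $f(0)J\subseteq K$; here $f\neq 0$, and since $JR=(0)$ would force $A=(0)$, also $J\neq(0)$. Recall that $J+XL[X]$ is an ideal of $R$, so the displayed set is genuinely an ideal, and that $f$, lying in $R$, commutes with everything; in particular, pulling $f$ out of a product of ideals gives $XL[X]\cdot A=f(X)\cdot\bigl(XL[X]\cdot(J+XL[X])\bigr)$, so everything reduces to understanding the product $XL[X]\cdot(J+XL[X])$ computed inside $L[X]$.

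First I would prove the absorption identity $XL[X]\cdot(J+XL[X])=XL[X]$. The inclusion ``$\subseteq$'' is immediate: a generator $(Xp)(j+Xq)=Xpj+X^{2}pq$ lies in $XL[X]$ because $j\in L$ makes $pj\in L[X]$. For ``$\supseteq$'' — the only place where the hypothesis that $L$ is a \emph{field} is used — fix once and for all some $0\neq j_{0}\in J$; then for every $h\in L[X]$ we have $Xh=j_{0}\cdot\bigl(X(h/j_{0})\bigr)$ with $j_{0}\in J+XL[X]$ and $X(h/j_{0})\in XL[X]$ (here $h/j_{0}\in L[X]$ since $j_{0}$ is a nonzero element of the field $L$). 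Hence $XL[X]\cdot A=f(X)\cdot XL[X]=\{\,f(X)\,X\,h(X):h\in L[X]\,\}$.

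It remains only to exhibit the required factorization. Let $x\in XL[X]\cdot A$; by the last identity, $x=f(X)\,X\,h(X)$ for some $h\in L[X]$. Put $v:=f(X)\,j_{0}$ and $u:=X\,h(X)/j_{0}$. Then $v\in A$ because $j_{0}\in J\subseteq J+XL[X]$; $u\in XL[X]$ because $L$ is a field; and a direct check gives $uv=f(X)\,X\,h(X)=x$. Thus every element of $XL[X]\cdot A$ is an actual product of an element of $XL[X]$ by an element of $A$, i.e. $XL[X]$ and $A$ form a condensed pair.

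I expect essentially no obstacle here beyond careful bookkeeping; the one substantive point is the absorption identity $XL[X]\cdot(J+XL[X])=XL[X]$, which would break down if $L$ were only an overdomain of $K$ rather than a field, and one must be slightly careful to load the factor $v$ with both the polynomial $f(X)$ and the scalar $j_{0}$ so that $v$ remains inside $A$ while $u=x/v$ remains inside $XL[X]$. An alternative, matching the case split flagged in the text, would be to invoke Theorem~1 of \cite{AD} to reduce to $2$-generated ideals and treat separately the types $A=X^{r}g(X)\,XL[X]$ (which is the case $f=X^{r+1}g$, $J=L$ above) and $B=X^{s}g(X)\,JR$ (the case $f=X^{s}g$); but the uniform computation above makes the case analysis unnecessary.
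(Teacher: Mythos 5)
Your proof is correct, but it takes a genuinely different route from the paper's. You invoke the structure theorem for ideals of $R=K+XL[X]$ (the version of Proposition 3 of \cite{Zaf potent} recalled just before the lemma), reduce everything to the absorption identity $XL[X]\cdot (J+XL[X])=XL[X]$ for $J\neq (0)$, and then exhibit the factorization $f(X)Xh(X)=\bigl(Xh(X)/j_{0}\bigr)\bigl(f(X)j_{0}\bigr)$ explicitly; this is complete, and the one nontrivial ingredient (that $L$ is a field, so one may divide by $j_{0}$) is clearly isolated. The paper instead argues directly on an arbitrary element $x=\sum Xf_{i}a_{i}$ of the product, in the spirit of the alternative proof of Lemma \ref{Lemma E1}: it uses that $L[X]$ is a PID to extract a gcd $f$ of the $f_{i}$, writes $f_{i}=l_{i}h_{i}f$ with $h_{i}\in R$ and $l_{i}\in L$, and then recombines the sum into a single product $g(X)a(X)$ with $g\in XL[X]$ and $a\in A$. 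That route avoids the classification of ideals of $R$, but as printed the recombination step is telegraphic (the left factors $l_{i}Xf$ are summed while the right factors $h_{i}a_{i}$ still vary with $i$), whereas your argument has no such step to worry about, at the cost of leaning on the ideal-structure result, which the paper has recalled anyway and uses in the surrounding discussion. One small caveat in your write-up: you discard the possibility $J=(0)$ on the grounds that $A=f(X)JR$ would then be $(0)$, but the paper's discussion immediately before the lemma explicitly allows nonzero ideals of the form $f(X)XL[X]$; this case is harmless for your method, since $f(X)XL[X]=\bigl(f(X)X\bigr)\bigl(L+XL[X]\bigr)$ puts it back into the form you treat (or, directly, every element $f(X)X^{2}h(X)$ of $XL[X]\cdot f(X)XL[X]$ equals $\bigl(Xh(X)\bigr)\bigl(f(X)X\bigr)$), so the proof stands after this one-line patch.
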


The proof works as in Lemma \ref{Lemma E1}. Let $x\in XL[X]A.$ Then $x=\sum
Xf_{i}(X)a_{i}(X).$ Since $f_{i}\in L[X]$ we have $(f_{1},...,f_{n})=f$,
because $L[X]$ is a PID. So $f_{i}=h_{i}(X)f(X)$ and $%
Xf_{i}=l_{i}Xh_{i}(X)f(X)$ where $h_{i}(X)\in R.$ But then $\sum
l_{i}Xf=lXf\in XL[X]$ (because $L$ is a field) and $\sum h_{i}a_{i}(X)\in A,$
because $h_{i}\in R$. Thus $x=g(X)a(X)$ where $g\in XL[X]$ and $a(X)\in A.$

Next note that, in $F=X^{s}g(X)JR$, or in $F=XJR,$ $F$ is $2$-generated if
and only if $J$ is a $2$-generated $K$-subspace of $L$. of $r.$

Let $K\subseteq L$ be an extension of fields. In the second section of \cite%
{ADu} Anderson and Dumitrescu introduce the notion of $K\subseteq L$ being $%
vs$-closed as follows. Let $V,W$ be two $K$-subspaces of $L.$ Let $%
P(V,W)=\{vw|v\in V$ and $w\in W\}$ and let $VW$ denote the $K$-subspace of $%
L $ generated by $P(V,W).$ Call $K\subseteq L$ $vs$-closed if for each pair $%
V,W$ of $K$-subspaces of $L$ we have $VW=P(V,W).$ According to Proposition
2.6 of \cite{ADu}, $K\subseteq L$ is $vs$-closed if and only if for every $%
\alpha ,\beta \in L$, $1$+$\alpha \beta $ $=(a+b\alpha )(c+d\beta )$ for
some $a,b,c,d\in $ $K.$ Using the fact that if $[L:K]\geq 4$ then $L$
affords a pair of elements $\alpha ,\beta $ such that $1,\alpha ,\beta
,\alpha \beta $ are linearly independent over $K$ the authors of \cite{ADu}
concluded that when $K\subseteq L$ is $vs$-closed $[L:K]\leq 3.$

\begin{lemma}
\label{Lemma H} The ring $R=K+XL[X]$ is condensed if and only if for every
pair of distinct ideals of the form $C=X(J+XL[X])$ where $J$ is a strictly
two generated nonzero $K$-subspace of $L,$ is condensed.
\end{lemma}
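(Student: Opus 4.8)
The plan is as follows. The forward implication needs no work, since in a condensed domain every pair of nonzero ideals is a condensed pair. For the converse I would reduce, via \cite[Theorem 1]{AD}, to showing that an arbitrary pair $A,B$ of nonzero $2$-generated ideals of $R$ is a condensed pair, and throughout I would exploit the elementary ``cancellation'' fact --- proved exactly as the analogous step in the proof of Lemma \ref{Lemma E1}, but with the scalar $a\in D$ there replaced by an arbitrary nonzero $h\in R$ --- that for ideals $I,J$ of $R$ the pair $hI,J$ is a condensed pair if and only if $I,J$ is. (Indeed $hI=\{hi:i\in I\}$ and multiplication by $h$ is a bijection $IJ\to(hI)J$; and being a condensed pair is symmetric, so a nonzero factor from $R$ may be cancelled off either member of a pair.)

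Next I would normalize a $2$-generated ideal using Proposition \ref{Proposition E} with base domain the field $K$: such an ideal $A$ has the form $f(X)(J+XL[X])$ with $f(X)\in R$, $f(0)J\subseteq K$, and $J$ a $K$-subspace of $L$ with $\dim_{K}J\le 2$. If $\dim_{K}J\le 1$ I would cancel $f(X)$ and observe that $A$ becomes either $XL[X]$ (when $J=0$) or --- writing $J=K\alpha$ and using that $L$ is a field, so $J+XL[X]=\alpha(K+XL[X])$, together with $f(0)\alpha\in K$ --- a principal ideal; either way $A$ forms a condensed pair with \emph{every} ideal of $R$ (trivially for principal ideals, and by Lemma \ref{Lemma G} for $XL[X]$), so a pair in which either member is of this kind is already condensed. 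If $\dim_{K}J=2$ then $f(0)J\subseteq K$ forces $f(0)=0$; writing $f(X)=X^{r}g(X)$ with $r\ge 1$ and (after a harmless rescaling of the generators of $J$) $g(0)=1$, one has $X^{r-1}g(X)\in R$, and cancelling it reduces $A$ to $C_{J}:=X(J+XL[X])$. So the problem comes down to: \emph{$C_{V},C_{W}$ is a condensed pair for all two-dimensional $K$-subspaces $V,W$ of $L$.}

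At this stage I would carry out a direct computation, in the style of Lemma \ref{Lemma E1}, to show $C_{V}C_{W}=X^{2}(VW+XL[X])$ --- where $VW$ denotes the $K$-span of $\{vw:v\in V,\ w\in W\}$ --- and that $C_{V},C_{W}$ is a condensed pair exactly when every $\gamma\in VW$ factors as $\gamma=vw$ with $v\in V$ and $w\in W$, i.e.\ when $VW=\{vw:v\in V,\ w\in W\}$. For $V\neq W$ this is precisely the hypothesis applied to the distinct pair $C_{V},C_{W}$ (note that $J$ is recovered from $C_{J}$ as the set of coefficients of $X$ occurring in its elements, so $C_{V}=C_{W}$ forces $V=W$). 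The remaining, and crucial, case is $V=W$: if $V=L$ --- which can only occur when $[L:K]=2$ --- then $C_{V}=XL[X]$ and Lemma \ref{Lemma G} applies; otherwise $V\subsetneq L$, and then there is $\lambda\in L^{\ast}$ with $\lambda V\neq V$ (if $\lambda V=V$ for all $\lambda\in L^{\ast}$ then, fixing $0\neq v_{0}\in V$, every $\mu\in L$ lies in $(\mu/v_{0})V=V$, forcing $V=L$), so applying the hypothesis to the distinct pair $C_{V},C_{\lambda V}$ gives $V(\lambda V)=\{v(\lambda w):v,w\in V\}$, that is $\lambda(VV)=\lambda\{vw:v,w\in V\}$, whence $VV=\{vw:v,w\in V\}$ and $C_{V},C_{V}$ is a condensed pair.

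I expect the main obstacle to be exactly this diagonal case $V=W$, which the hypothesis does not cover directly: the key idea is to rescale $V$ by a scalar $\lambda\in L^{\ast}$ that does not stabilize $V$, converting the desired factorization statement into an instance for a genuinely distinct pair. The only other point demanding care is the bookkeeping in the normalization step --- checking that, after cancelling a factor from $R$, every $2$-generated ideal of $R$ is either harmless (principal or $XL[X]$) or of the shape $C_{J}$ with $\dim_{K}J=2$ --- and verifying the displayed formula for $C_{V}C_{W}$ together with the factorization criterion it yields.
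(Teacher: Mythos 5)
Your argument is correct, and its skeleton coincides with the paper's: reduce to pairs of $2$-generated ideals via \cite{AD}, write such an ideal as $f(X)(J+XL[X])$ with $J$ a $K$-subspace of $L$ of dimension at most $2$, cancel nonzero factors of $R$ off either member of a pair, dispose of the cases where the ideal becomes principal or $XL[X]$ (Lemma \ref{Lemma G}), and observe that what remains is exactly the pairs $X(V+XL[X])$, $X(W+XL[X])$ with $V,W$ two-dimensional; the translation of condensedness of such a pair into $VW=\{vw\,:\,v\in V,\ w\in W\}$, which you verify directly, is the content the paper defers to Proposition \ref{Proposition J}. Where you genuinely go beyond the paper is the diagonal case $V=W$: the hypothesis of the lemma speaks only of \emph{distinct} ideals of this form, and the paper's proof ends with ``that leaves the case (c,c)'' without explaining how a square $C_V^{2}$ is covered. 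Your rescaling device --- choose $\lambda\in L^{\ast}$ with $\lambda V\neq V$ (possible unless $V=L$, in which case $C_V=XL[X]$ and Lemma \ref{Lemma G} applies), apply the hypothesis to the distinct pair $C_V$, $C_{\lambda V}$, and cancel $\lambda$ from $V(\lambda V)=\{v\lambda w\}$ --- shows that the distinct-pairs hypothesis really does suffice, so your write-up proves the statement as literally worded, something the paper's own proof glosses over. In short: same route, plus one extra observation that is actually needed.
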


\begin{proof}
Indeed the assertion holds if $R$ is condensed. For the converse we note,
using the observations prior to Lemma \ref{Lemma G}, that $R$ has proper
ideals of the following types: (a) $A=XL[X]$, and this covers the case of $%
f(X)XL[X]$, where $J=0$ and $f(0)=1.$ (Because if $f\in R\backslash \{0\},$ $%
(fA,B)$ is a condensed pair if and only if $(A,B)$ is a condensed pair), (b) 
$B=f(X)R,$ but this is principal and will form a condensed pair with every
other ideal, as we have already observed.) This leaves ideals of the type
(c) $C=X^{r}g(X)(J+XL[X])$ where $J$ is a strictly two generated nonzero $K$
subspace of $L.$ Now with reference to the proof of Theorem \ref{Theorem F}
the cases of (a,a), (a,b) and (a,c) have been settled in Lemma \ref{Lemma G}%
. The cases of (b,b) and (b,c) are settled because $B$ is nonzero principal.
That leaves the case of (c,c) and that establishes the lemma.
\end{proof}

\begin{proposition}
\label{Proposition J} Let $K\subseteq L$ be an extension of fields, let $X$
be an indeterminate over $L$ and let $R=K+XL[X].$ Then $R$ is condensed if
and only if $K\subseteq L$ is $vs$-closed. Moreover if $[L:K]\geq 4,$ $%
R=K+XL[X]$ is not condensed.
\end{proposition}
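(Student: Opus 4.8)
The plan is to reduce the condensedness of $R=K+XL[X]$ to the single identity characterizing $vs$-closedness in Proposition~2.6 of \cite{ADu}, namely that $1+\alpha\beta=(a+b\alpha)(c+d\beta)$ for all $\alpha,\beta\in L$ with suitable $a,b,c,d\in K$. By Lemma~\ref{Lemma H} it suffices to decide when every pair $C_1=X(V+XL[X])$, $C_2=X(W+XL[X])$ with $V,W$ strictly two-generated $K$-subspaces of $L$ is a condensed pair, since all other types of ideals have been disposed of. So first I would write $C_1C_2=X^2(VW+XL[X])$, where $VW$ is the $K$-subspace generated by $P(V,W)=\{vw\mid v\in V,\ w\in W\}$, and observe that a typical element of $C_1C_2$ has the shape $x=X^2(\gamma+Xh(X))$ with $\gamma\in VW$ and $h\in L[X]$. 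Writing $x=c_1c_2$ with $c_1=X(v+Xp(X))\in C_1$ and $c_2=X(w+Xq(X))\in C_2$, comparison of the lowest-degree ($X^2$) coefficient forces $\gamma=vw$ with $v\in V$, $w\in W$; conversely, once such a factorization $\gamma=vw$ is available, the higher-degree tail $Xh(X)$ can always be absorbed exactly as in the proof of Theorem~\ref{Theorem F}, case (cc) (put the correction term $\frac{X}{v}h(X)$ into $q$). Hence the pair $C_1,C_2$ is condensed for all such $V,W$ if and only if $VW=P(V,W)$ for all two-generated $K$-subspaces $V,W$ of $L$, which (since every finite-dimensional subspace is a sum of two-generated ones and $P$ behaves well enough) is precisely the $vs$-closedness of $K\subseteq L$.

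Next I would turn the subspace identity into the scalar identity. A strictly two-generated $V$ can, after scaling one generator, be taken to contain $1$, so $V=K\cdot 1+K\alpha$ and similarly $W=K\cdot 1+K\beta$; then $P(V,W)\ni 1\cdot1,\ 1\cdot\beta,\ \alpha\cdot1,\ \alpha\beta$, and $VW$ is spanned by $1,\alpha,\beta,\alpha\beta$. The content of $vs$-closedness for this pair is exactly that $1+\alpha\beta\in P(V,W)$, i.e.\ $1+\alpha\beta=(a+b\alpha)(c+d\beta)$ for some $a,b,c,d\in K$; the general element of $VW$ reduces to this normalized case by translating and scaling. This is the equivalence recorded in Proposition~2.6 of \cite{ADu}, so I would simply cite it rather than reprove it.

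Finally, for the ``moreover'' clause: if $[L:K]\geq 4$, then (as the authors of \cite{ADu} note, and as is easy to see by a dimension count) $L$ contains $\alpha,\beta$ with $1,\alpha,\beta,\alpha\beta$ linearly independent over $K$; taking $V=K+K\alpha$ and $W=K+K\beta$, the element $1+\alpha\beta\in VW$ cannot lie in $P(V,W)$, because any product $(a+b\alpha)(c+d\beta)=ac+bc\alpha+ad\beta+bd\alpha\beta$ matching $1+\alpha\beta$ would force $ac=1$, $bc=0$, $ad=0$, $bd=1$, which is inconsistent. Hence $K\subseteq L$ is not $vs$-closed and, by the equivalence just proved, $R=K+XL[X]$ is not condensed.

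\textbf{Main obstacle.} The only genuinely delicate point is the ``only if'' direction of the reduction in the first paragraph: one must check that a factorization of the \emph{lowest} coefficient $\gamma=vw$ can always be upgraded to a factorization of the whole polynomial $x=X^2(\gamma+Xh(X))$ inside $C_1C_2$ — i.e.\ that the higher-degree tail never obstructs condensedness — and, in the reverse direction, that condensedness of these specific ideal pairs really does force $P(V,W)=VW$ for \emph{all} finitely generated $V,W$, not merely the two-generated ones. The first is handled by the explicit tail-absorption trick already used in Theorem~\ref{Theorem F}; the second follows because $P$ and the subspace product are both ``additive enough'' in each argument that the two-generated case propagates, together with Lemma~\ref{Lemma G} and Lemma~\ref{Lemma H} which confine attention to exactly these pairs. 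Once these bookkeeping steps are in place, the proposition is immediate from Proposition~2.6 of \cite{ADu}.
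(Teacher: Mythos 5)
Your proposal is correct and follows essentially the same route as the paper: reduce via Lemma \ref{Lemma H} to pairs $C_i=X(\mathfrak{C}_i+XL[X])$, compare the $X^2$ coefficient to force $\gamma=\gamma_1\gamma_2$ in one direction, absorb the tail with the $\frac{X}{\gamma_1}h(X)$ trick from the (cc) case of Theorem \ref{Theorem F} in the other, and invoke Proposition 2.6 of \cite{ADu} for the $vs$-closedness criterion and the $[L:K]\geq 4$ failure (where you spell out the linear-independence computation the paper only cites).
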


\begin{proof}
Suppose that $K\subseteq L$ is $vs$-closed. By Lemma \ref{Lemma H}, all we
have to do is study the case (c,c) of pairs of two generated ideals of the
form $C=X(\mathfrak{C}+XL[X]).$ That is $C_{1}=X(\mathfrak{C}_{1}+XL[X])$
and $C_{2}=X(\mathfrak{C}_{2}+XL[X]).$ $C_{1}C_{2}=X^{2}(\mathfrak{C}_{1}%
\mathfrak{C}_{2}+XL[X]).$ Let $x\in C_{1}C_{2}$ and let $\gamma \in 
\mathfrak{C1C}_{2}\{0\}.$ Then $x=X^{2}(\mathfrak{\gamma }+Xh(X)).$ But $%
\mathfrak{C}_{1}$ and $\mathfrak{C}_{2}$ are both $K$-subspaces of $L.$ So $%
\mathfrak{C}_{i}=(l_{i1},l_{i2})$ where $l_{ij}$ are elements of $L.$ As $%
K\subseteq L$ is $vs$-closed $\gamma =(\gamma _{1})(\gamma _{2})$ where $%
\gamma _{i}=(k_{i1}l_{i1}+k_{i1}l_{i2})\in \mathfrak{C}_{i}.$ Set $%
c_{1}=X\gamma _{1}\in C_{1}$ and set $c_{2}=X(\gamma _{2}+\frac{X}{\gamma
_{1}}h(X)).$ Now $X\gamma _{2}\in C_{2}$ patently because $\gamma _{2}\in 
\mathfrak{C}_{2}$ and $X(\frac{X}{\gamma _{1}}h(X))\in C_{2}$ because $X(%
\frac{X}{\gamma _{1}}h(X))\in X(XL[X]).$ Since both belong to $X^{s}g_{2}(X)(%
\mathfrak{C}_{2}+XL[X)$ their sum must do the same. Now check that $%
c_{1}c_{2}=$ $c=(X\gamma _{1})$ $X(\gamma _{2}+\frac{X}{\gamma _{1}}h(X))$.
The converse can be proved as follows. Suppose that $K+XL[X]$ is condensed,
then for each pair of two generated nonzero ideals $C_{1}=X^{r}g(X)(%
\mathfrak{C}_{1}+XL[X])$ and $C_{2}=X^{r}g(X)(\mathfrak{C}_{2}+XL[X]).$ That
is $C_{1}^{\prime }=X(\mathfrak{C}_{1}+XL[X])$ and $C_{2}^{\prime }=X(%
\mathfrak{C}_{2}+XL[X])$ is a condensed pair. That is, for $c=X^{2}(\gamma
+Xh(X))$ we must find $c_{1}=X(\gamma _{1}+Xf(X))$ and $c_{2}=X(\gamma
_{2}+Xg(X))$ to get $c_{1}c_{2}=X^{2}(\gamma _{1}+Xf(X))(\gamma
_{2}+Xg(X))=X^{2}(\gamma _{1}\gamma _{2}+\gamma _{1}Xg(X)+\gamma
_{2}Xf(X)+X^{2}f(X)g(X))=X^{2}(\gamma +Xh(X))=c.$ Comparing the coefficients
of $X^{2}$ we must have $\gamma =\gamma _{1}\gamma _{2}$ where $\gamma
_{i}\in \mathfrak{C}_{i}$, as desired. (This leaves the case of $Xh(X)$ not
being a product, as indicated. The situation can be resolved by taking $%
c_{1}=X\gamma _{1}$ and $c_{2}=X(\gamma _{2}+(X/\gamma _{1})h(X)).$ For the
moreover part, observe that as we have noted $[L:K]\geq 4$ implies that $%
K\subseteq L$ is not $vs$-closed.
\end{proof}

Now one can go mimicking the $vs$-closed idea of \cite{ADu} by letting $M,N$ 
$D$-submodules of $L$ and letting $P(M,N)=\{mn|m\in M,n\in N\}$, letting $MN$
be the module generated by $P(M,N),$ and calling $D\subseteq L$ $sm$-closed
(submodule closed), if for every pair of two generated submodules $M,N$ one
has $P(M,N)=MN.$ Repeating the steps taken in the proofs of Theorem \ref%
{Theorem F} and Proposition \ref{Proposition J} one can prove the following
theorem.

\begin{corollary}
\label{Corollary L} Let $D$ be a domain, $K$ the quotient field of $D$, let $%
L$ be an extension of $L$ and let $X$ be an indeterminate over $L.$ Then the
following hold. (1) $D+XL[X]$ is condensed if and only if $D$ is condensed
and $D\subseteq L$ is $sm$-closed. (2) If $D+XL[X]$ is condensed, $[L:K]\leq
3.$
\end{corollary}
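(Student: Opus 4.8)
The plan is to get part~(2) almost for free and to establish part~(1) by running the proofs of Theorem~\ref{Theorem F} and Proposition~\ref{Proposition J} together, with ``fractional ideal of $D$'' and ``$K$-subspace of $L$'' replaced throughout by ``$D$-submodule of $L$''. For (2): if $R=D+XL[X]$ is condensed, then $K+XL[X]$ is a ring lying between $R$ and its quotient field (since $K\subseteq\mathrm{Frac}(D)$), hence an overring of $R$; overrings of condensed domains are condensed \cite{AD}, so $K+XL[X]$ is condensed, and the ``moreover'' clause of Proposition~\ref{Proposition J} forces $[L:K]\le 3$. Note that routing (2) through $K+XL[X]$ is essentially unavoidable: a $K$-subspace of $L$ need not be a finitely generated $D$-module, so $sm$-closedness of $D\subseteq L$ does not directly constrain it, whereas passing to this overring trades $D$ for its fraction field at no cost.

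For (1) the forward direction is the lighter half. If $R$ is condensed, then $D$ is condensed by Proposition~\ref{Proposition D} (applied with $A=D$, $B=L$). For $sm$-closedness, fix $2$-generated $D$-submodules $M=Dm_{1}+Dm_{2}$ and $N=Dn_{1}+Dn_{2}$ of $L$. Since $L$ is a field, $m_{i}R=m_{i}D+XL[X]$, so $M+XL[X]=m_{1}R+m_{2}R$ and $C:=X(M+XL[X])=Xm_{1}R+Xm_{2}R$ is a $2$-generated ideal of $R$; similarly $C':=X(N+XL[X])$. A short computation gives $CC'=X^{2}(MN+XL[X])$. For $\gamma\in MN\setminus\{0\}$, condensedness of $R$ writes the monomial $\gamma X^{2}\in CC'$ as $c_{1}c_{2}$ with $c_{1}=X(m+Xp(X))\in C$ and $c_{2}=X(n+Xq(X))\in C'$, where $m\in M$, $n\in N$; comparing coefficients of $X^{2}$ forces $\gamma=mn\in P(M,N)$. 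As $P(M,N)\subseteq MN$ always, $MN=P(M,N)$, so $D\subseteq L$ is $sm$-closed.

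For the converse I would argue as in Theorem~\ref{Theorem F}. By \cite{AD} it suffices to show every pair of $2$-generated nonzero ideals of $R$ is condensed, and by Proposition~\ref{Proposition E} (and the remark following it) each such ideal is $f(X)(J+XL[X])$ with $J$ a $2$-generated $D$-submodule of $L$ and $f(0)J\subseteq D$. As in the remarks preceding Theorem~\ref{Theorem F}: the case $J=0$ is handled by Lemma~\ref{Lemma E1}; if $f(0)\ne0$ one absorbs $f(0)$ into $J$ and the resulting unit into a principal factor, reducing the ideal to $(\text{principal})\cdot(\mathfrak A+XL[X])$ with $\mathfrak A$ a $2$-generated ideal of $D$; if $f(0)=0$ one writes $f=X^{r}g$ with $g(0)=1$ and factors out the principal ideal generated by $X^{r-1}g$, which lies in $R$, reducing to $X(\mathfrak C+XL[X])$ with $\mathfrak C$ a $2$-generated $D$-submodule of $L$. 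Since multiplication by a nonzero principal ideal preserves condensedness of a pair, we are left with the three cases (a,a), (a,c), (c,c) of Theorem~\ref{Theorem F}. In (a,a) the product ideal $\mathfrak A\mathfrak A_{1}$ is condensed because $D$ is; in (a,c) and (c,c) the relevant product of $2$-generated $D$-submodules of $L$ coincides with $P(\,\cdot\,,\,\cdot\,)$ because $D\subseteq L$ is $sm$-closed (and since a $2$-generated ideal of $D$ is a $2$-generated $D$-submodule of $L$, $sm$-closedness even re-proves (a,a), so the two hypotheses overlap --- harmless). In each case one factors the bottom coefficient $\gamma$ of the target element as $\gamma=\gamma_{1}\gamma_{2}$ with $\gamma_{i}$ in the respective module, sets $c_{1}$ to be $\gamma_{1}$ times the appropriate power of $X$ and $c_{2}$ to be $(\gamma_{2}+(X/\gamma_{1})h(X))$ times the complementary power of $X$ (where $h$ is the tail of the target element), checks the $c_{i}$ lie in the respective ideals using that $L$-coefficients may be used freely, and verifies $c_{1}c_{2}$ equals the target element.

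The hard part is not any individual step but keeping the bookkeeping honest: one must verify carefully that the reduction from the general shape $f(X)(J+XL[X])$ to the shapes (a) and (c) goes through (this is exactly where ``$L$ a field'' is used again and again --- to normalize $f(0)$ or $g(0)$ to $1$, and to see identities such as $\mathfrak A\cdot XL[X]=XL[X]$), and that the ideals $X(M+XL[X])$ are genuinely $2$-generated, so that the two-generator criterion of \cite{AD} and the finite-type clause of Proposition~\ref{Proposition E} are actually applicable. Once these points are secured, the three case computations are routine transcriptions of those in Theorem~\ref{Theorem F}, now carried out with arbitrary $2$-generated $D$-submodules of $L$ in place of fractional ideals.
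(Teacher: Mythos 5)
Your proposal is correct and takes essentially the same route as the paper: for (2) you pass to the overring $K+XL[X]$ (the paper does the same via the localization $R_{S}$ with $S=D\setminus\{0\}$) and invoke the ``moreover'' part of Proposition \ref{Proposition J}, and for (1) you rerun the case analysis of Theorem \ref{Theorem F} and Proposition \ref{Proposition J} with $2$-generated $D$-submodules of $L$, which is exactly the argument the paper indicates and leaves to the reader. Your extra details (Proposition \ref{Proposition D} for the forward direction, the coefficient comparison extracting $sm$-closedness, and the reduction of $2$-generated ideals via Proposition \ref{Proposition E} and principal factors) faithfully fill in that sketch.
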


\begin{proof}
We leave (1) for an interested reader and for (2) we note that if $R=D+XL[X]$
is condensed and if $S=D\backslash \{0\},$ then so is $R_{S}=K+XL[X]$ and
this forces $[K:L]\leq 3.$
\end{proof}

This study may give us a number of examples and indirect results such as the
following. The go to reference for the following examples is \cite{AAZ}.

\begin{example}
\label{Example M} (1) Let $K\subseteq L$ be an extension of fields with $K=Q$
the field of rational numbers and $L$ a quadratic extension of $Q.$ Then $%
K+XL[X]$ is atomic, and condensed and hence cannot be a $\ast $-domain, nor
a pre-Schreier domain.
\end{example}

(2) With $Q$ and $L$ as above, $Q+XL[X]$ is atomic, and condensed, with the
property that every overring is atomic. This is because the integral closure
of $Q+XL[X]$ is $L[X]$ \cite{DSZ}. Of course if $[L:K]<\infty ,$ every
overring of $K+XL[X]$ would still be atomic, but in most cases the ring is
not condensed.

(3) Let $K\subseteq L$ be an extension of fields with $D=K+XL[X]$ condensed.
Then the following are equivalent. (a) $D$ is a $\ast $-domain, (b) $D$ is a
PID, (c) $D$ is pre-Schreier, (d) $D$ is integrally closed and (e) $K=L.$
(a) $\Rightarrow $ (b) (A condensed star domain is pre-Schreier), a
pre-Schreier atomic domain is a UFD and a condense UFD is a PID, (b) $%
\Rightarrow $ (c) Obvious (c) $\Rightarrow $ (a) a pre-Schreier domain is a $%
\ast $-domain, (b) $\Rightarrow $ (d) a PID is integrally closed (d) $%
\Rightarrow $ (b) An integrally closed condensed domain is Bezout and an
atomic Bezout domain is a PID. Finally, the equivalence of (d) and (e) is
obvious.

Theorem \ref{Theorem F} can be used to prove that if $D$ is condesed and if $%
K$ is a quotient field of $D,$ then $D+XK[X]$ is condensed.

\end{document}